\DeclareMathAlphabet{\cmcal}{OMS}{cmsy}{m}{n}
\newtheoremstyle{thm}
  {3pt}
  {3pt}
  {\em}
  {0pt}
  {\bfseries}
  {}
  {5pt}
  {}
\newtheoremstyle{rem}
  {3pt}
  {3pt}
  {}
  {0pt}
  {\bfseries}
  {.}
  {5pt}
  {}
\newtheorem{thm}{Theorem}[section]
\newtheorem{cor}[thm]{Corollary}
\newtheorem{lem}[thm]{Lemma}
\newtheorem{prop}[thm]{Proposition}
\newtheorem{conj}[thm]{Conjecture}
\theoremstyle{definition}
\newtheorem{defn}[thm]{Definition}
\theoremstyle{rem}
\newtheorem{rem}[thm]{{Remark}}
\numberwithin{equation}{section} \numberwithin{table}{section}
\newtheorem*{thm*}{Theorem}
\newtheorem*{rem*}{Remark}
\newtheorem*{rems*}{Remarks}
\newtheorem*{exam*}{Example}
\newtheorem*{exams*}{Examples}
\newcommand{\neutralize}[1]{\expandafter\let\csname c@#1\endcsname\count@}
\def\bos#1{{\mathbf{#1}}}
 \newcommand{\res}{\operatorname{Res}}
  \newcommand{\nc}{\newcommand}
  \newcommand{\be}{\begin{eqnarray*}}
  \newcommand{\ee}{\end{eqnarray*}}
  \newcommand{\bea}{\begin{eqnarray}}
  \newcommand{\eea}{\end{eqnarray}}
  \newcommand{\bs}{\begin{split}}
  \newcommand{\es}{\end{split}}
  \newcommand{\bal}{\begin{align}}
  \newcommand{\eal}{\end{align}}
   \nc{\bei}{\begin{itemize}}
   \nc{\eei}{\end{itemize}}
   \nc{\bee}{\begin{enumerate}}
   \nc{\eee}{\end{enumerate}}
   \nc{\bet}{\begin{thm}}
   \nc{\eet}{\end{thm}}
   \nc{\bed}{\begin{defn}}
   \nc{\eed}{\end{defn}}
   \nc{\bel}{\begin{lem}}
   \nc{\eel}{\end{lem}}
   \nc{\bep}{\begin{prop}}
   \nc{\eep}{\end{prop}}
   \nc{\bec}{\begin{corollary}}
   \nc{\eec}{\end{corollary}}
   \nc{\ber}{\begin{rem}}
   \nc{\eer}{\end{rem}}
   \nc{\beex}{\begin{example}}
   \nc{\eeex}{\end{example}}
   \nc{\bpm}{\begin{pmatrix}}
   \nc{\epm}{\end{pmatrix}}
   \nc{\bspm}{\left(\begin{smallmatrix}}
   \nc{\espm}{\end{smallmatrix}\right)}
\newcommand{\cC}{\mathcal{C}}
\newcommand{\cO}{\mathcal{O}}
\newcommand{\cU}{\mathcal{U}}
\newcommand{\bC}{\mathbb{C}}
\newcommand{\bH}{\mathbb{H}}
\newcommand{\bQ}{\mathbb{Q}}
\newcommand{\BP}{\mathbf{P}}
\nc{\frf}{\mathfrak{f}} 
\nc{\frs}{\mathfrak{s}}  
\nc{\frt}{\mathfrak{t}} 
\nc{\fru}{\mathfrak{u}}
\nc{\lsl}{\mathfrak{sl}}
\nc{\lgl}{\mathfrak{gl}}
\nc{\upsi}{\underline{\psi}}
\nc{\uchi}{\underline{\chi}}
\newcommand{\lra}{\longrightarrow}    
\nc{\surjto}{\twoheadrightarrow}
\nc{\ts}{\times}
\nc{\ds}{\displaystyle}
\nc{\nd}{\noindent}  
\nc{\ud}{\underline}
\nc{\ov}{\overline}
\nc{\maplra}[1]{\buildrel #1 \over \lra}
\nc{\mapto}[1]{\buildrel #1 \over \to}
\nc{\setb}[1]{\{  #1\}}
 \nc{\cHom}{\mathcal{H}om}
\nc{\cdruur}[8] {\begin{CD} 
#1 @>#2>> #3\\ 
@AA#4A @AA#5A\\ 
#6 @>#7>> #8 
\end{CD} }
\nc{\cdrddr}[8] {\begin{CD} 
#1 @>#2>> #3\\ 
@VV#4V @VV#5V\\ 
#6 @>#7>> #8 
\end{CD} }
\nc{\dia}[8]{\xymatrix{ 
&#1 \ar@{-}[ld]_{#2} \ar@{-}[rd]^{#3} \\
#4 \ar@{-}[rd]_{#6} & &#5 \ar@{-}[ld]^{#7}\\ 
&#8} }
\nc{\diam}[9]{\xymatrix{ 
&#1 \ar@{-}[ld]_{#2}  \ar@{-}[d]^{#3} \ar@{-}[rd]^{#4} \\
#5 \ar@{-}[rd]_{#8}     & #6 \ar@{-}[d]_{#9}      & #7   \ar@{-}[ld]^{2} \\
& \bQ} } 
\nc{\sumn}[2][n]{#2_{1} +#2_{2}+ \cdots + #2_{#1}}
\nc{\poly}[3][n]{#2_{#1}#3^{#1} +#2_{#1-1}#3^{#1-1}  \cdots + #2_{1} #3+ #2_0}
\nc{\dpoly}[3][n]{#1#2_{#1}#3^{#1-1} +(#1-1)#2_{#1-1}#3^{#1-1}  \cdots +2 #2_{2} #3+ #2_1}
\nc{\mpoly}[3][n]{#3^{#1} +#2_{#1-1}#3^{#1-1}  \cdots + #2_{1} #3+ #2_0}
\nc{\vpar}[4]{    \left \{ \begin{array}{cc} #1 & \textrm{if } #2, \\
&\\
#3 & \textrm{if } #4. 
\end{array}\right. }
\nc{\ary}[5]{#1: \left\{ \begin{array}{ll} #2 &\mapsto #3 \\ #4 &\mapsto #5 \end{array} \right.}
 \nc{\bedm}{\begin{displaymath}}
 \nc{\eedm}{\end{displaymath}}
 \nc{\art}{\hbox{\bf Art}^\Z}
 \nc{\bvx}{\bos{B\!\!V}_{\! \!X}}
\newcommand{\pmat}{\left(\begin{matrix}}   
\newcommand{\epmat}{\end{matrix}\right)}   
\newcommand{\psmat}{\left(\begin{smallmatrix}}    
\newcommand{\epsmat}{\end{smallmatrix}\right)}
\nc{\twotwo}[4]{\pmat #1 & #2 \\ #3 & #4 \epmat}
\nc{\thrthr}[9]{\pmat #1 & #2 & #3 \\ #4 & #5 & #6 \\ #7 & #8 & #9 \epmat}
\nc{\stwotwo}[4]{\psmat #1 & #2 \\ #3 & #4 \epsmat}
\nc{\sthrthr}[9]{\psmat #1 & #2 & #3 \\ #4 & #5 & #6 \\ #7 & #8 & #9 \epsmat}
\def\eqalign#1{\null\,\vcenter{\openup\jot\m@th
\ialign{\strut\hfil$\displaystyle{##}$&$\displaystyle{{}##}$\hfil
\crcr#1\crcr}}\,}
\def\eqn#1#2{
\xdef #1{(\nsecsym\the\meqno)}
\global\advance\meqno by1
$$#2\eqno#1\eqlabeL#1
$$}
\def\a{\alpha}
\def\d{\delta}
\def\o{\omega}  \def\O{\Omega}
\def\Z{\mathbb{Z}}
\def\mod{\hbox{ }mod\hbox{ }}
\def\newsec#1{\global\advance\nsecno by1
\eqnres@t
\section{#1}}
\def\eqnres@t{\xdef\nsecsym{\the\nsecno.}\global\meqno=1}
\def\sequentialequations{\def\eqnres@t{\bigbreak}}\xdef\nsecsym{}
\def\draftmode{\message{ DRAFTMODE }

{\count255=\time\divide\count255 by 60 \xdef\hourmin{\number\count255}
\multiply\count255 by-60\advance\count255 by\time
\xdef\hourmin{\hourmin:\ifnum\count255<10 0\fi\the\count255}}}
\def\nolabels{\def\wrlabeL##1{}\def\eqlabeL##1{}\def\reflabeL##1{}}
\def\writelabels{\def\wrlabeL##1{\leavevmode\vadjust{\rlap{\smash%
{\line{{\escapechar=` \hfill\rlap{\tt\hskip.03in\string##1}}}}}}}%
\def\eqlabeL##1{{\escapechar-1\rlap{\tt\hskip.05in\string##1}}}%
\def\reflabeL##1{\noexpand\llap{\noexpand\sevenrm\string\string\string##1}
}}
\def\eqn#1#2{
\xdef #1{(\nsecsym\the\meqno)}
\global\advance\meqno by1
$$#2\eqno#1\eqlabeL#1
$$}
\def\eqalign#1{\null\,\vcenter{\openup\jot\m@th
\ialign{\strut\hfil$\displaystyle{##}$&$\displaystyle{{}##}$\hfil
\crcr#1\crcr}}\,}
   \nc{\hr}{[\![\hbar]\!]}
\nc{\bt}{\mathbf{t}}
\begin{document}

\title[ An algorithm for a Massey triple product of a smooth projective plane curve ]{An algorithm for a Massey triple product of a smooth projective plane curve}
%
\author{Younggi Lee}
\email{yglee@postech.ac.kr}
\author{Jeehoon Park}
\email{jeehoonpark@postech.ac.kr}
\author{Junyeong Park}
\email{junyeongp@gmail.com}
\author{Jaehyun Yim}
\email{yimjaehyun@postech.ac.kr}
\address{
Department of Mathematics, POSTECH,
77 Cheongam-Ro, Namgu,	
Pohang, Gyeongbuk, 37673,
South Korea.}
%
\subjclass[2000]{18G55(primary), 14J70, 14D15, 13D10 (secondary). }

\keywords{Massey triple product, Cech-deRham complex, smooth projective curve}

\begin{abstract}
We provide an explicit algorithm to compute a Massey triple product relative to a defining system for a smooth projective plane curve $X$ defined by a homogeneous polynomial $G(\ud x)$ over a field. 
The main idea is to use the description (due to Carlson and Griffiths) of the cup product for $H^1(X,\bC)$ in terms of the multiplications inside the Jacobian ring of $G(\ud x)$ and the Cech-deRham complex of $X$. 
Our algorithm gives a criterion whether a Massey triple product vanishes or not in $H^2(X)$ under a particular non-trivial defining system of the Massey triple product and thus can be viewed as a generalization of the vanishing criterion of the cup product in $H^2(X)$ of Carlson and Griffiths. Based on our algorithm, we provide explicit numerical examples by running the computer program.
\end{abstract}

\maketitle
\tableofcontents


\section{Introduction}

The Massey product is an important invariant in topology and appears in several areas of mathematics as a higher order structure which captures more refined informations than the usual cup product of the cohomology. 

Let $M$ be a topological manifold. Assume that there are three classes $x_i \in H^{m_i}(M,\bC),i=0,1,2$ such that $x_0 \cup x_1 =0=x_1\cup x_2$. Choose cochain representatives $X_i$ for the $x_i$. Since
$X_0 \cup X_1$ is a coboundary, choose $X_{01}$ so that $d(X_{01}) =X_0 \cup X_1.$ Similarly choose $X_{12}$ so that $d(X_{12}) = X_1 \cup X_2.$ Form the cochain
$$
X_{01} \cup X_2 -(-1)^{m_0}X_0 \cup X_{12}
$$
which becomes a cocycle. The choices of $X_{01}$ and of $X_{12}$ are not unique. Precisely either can be altered by a cocycle. Define the ideal
$$
J^\bullet_{{x_0},{x_2}} =x_0 \cup H^{\bullet-m_0}(M,\bC)+H^{\bullet-m_2}(M,\bC)\cup x_2 \subseteq H^\bullet(M,\bC)
$$ 
The cocycle $X_{01} \cup X_2 -(-1)^{m_0}X_0 \cup X_{12}$ can be altered by (a cochain representative of) any element in 
$J^{m_0+m_1+m_2-1}_{x_0,x_2}$.
For any $x_1 \in H^{m_2}(M,\bC)$ such that $x_0 \cup x_1 =0=x_1\cup x_2$,
the Massey triple product
$$
\langle x_0; x_1; x_2 \rangle \in H^{m_0+m_1+m_2-1}(M,\bC)/J^{m_0+m_1+m_2-1}_{{x_0},{x_2}} 
$$
is defined to be the class of $X_{01} \cup X_2 -(-1)^{m_0}X_0 \cup X_{12}$ in this quotient group. 

When $M$ is a compact Riemann surface (the main object of interest in the current paper) and $m_0=m_1=m_2=1$, then $H^{2}(M,\bC)/J^{2}_{{x_0},{x_2}}=0$ for nonzero $x_0, x_2 \in H^1(M,\bC)$ by the non-degeneracy of the cup product pairing for $H^1(M,\bC)$ and the Massey triple product vanishes.
In general, when $M$ is a compact K\"ahler manifold, then the Massey triple product is known to vanish by the result
of \cite{DGMS}.
 But in this article we will be interested in computing 
 \textit{the lifted Massey triple product} which is defined to be
the cohomology class (depending on $X_{01}$ and $X_{12}$)
$$
[ x_0, x_1, x_2]:=[X_{01} \cup X_2 -(-1)^{m_0}X_0 \cup X_{12}] \in H^{m_0+m_1+m_2-1}(M,\bC),
$$
for \textit{a particular explicit choice} of $X_{01}$ and $X_{12}$.
In other words, we will provide an algorithm for the value of the Massey triple product relative to a certain defining system (see Definition \ref{mase} below).

\begin{defn} \label{mase}
The Massey product $[f_0, . . . , f_n]$ is said to be defined if there is an array $A$ of Cech-de Rham 1-cochains
\begin{eqnarray*}
A=\{ F_{i_1 \cdots i_k} \in \cC_0^1 \oplus \cC_1^0    :   i_{p+1} = i_p + 1\ (\forall p) \}_{0 \leq k \leq n-1}
\end{eqnarray*}
such that $F_i$ is a 1-cocycle representing $f_i$ for $0\leq i  \leq n,$ and
$$
D(F_{i_1\cdots i_k})=F_{i_1\cdots i_{k-1}} \bullet F_{i_k} +F_{i_1}\cdots i_{k-2} \bullet F_{i_{k-1}i_k} + \cdots + F_{i_1} \bullet F_{i_2 \cdots i_k},
$$
for $0 \leq k \leq n-1, i_{p+1}=i_p +1 \ (\forall p)$.
\end{defn}
An array $A$ is called a defining system for $[f_0, \cdots, f_n]$. The value of the Massey $(n+1)-$th product relative to a defining system $A$, denoted by $[f_0,\cdots, f_n]_A$, is then defined to be the cohomology class of $\bH^2(X_G)$ represented by the 2-cocycle
$$
c(A):=F_{0\cdots n-1}\bullet F_n + F_{0 \cdots n-2}\bullet F_{n-1 n} +\cdots + F_0 \bullet F_{1\cdots n}.
$$

We will provide an explicit defining system $A$ for the Massey triple product of a projective smooth plane curve (based on Carlson-Griffiths' Jacobian vanishing criterion of the cup product and computations in Cech-deRham complex with respect to a Jacobian covering) and an algorithmic formula (see the main theorem \ref{mt}) for the $\bC$-valued Massey triple product relative to $A$. 



%

%
%

Now we explain a more precise set up of this paper. Let $X_G$ be a smooth projective curve in $\BP^2$ defined by a homogeneous polynomial $G(\ud x) \in \Bbbk[\ud x]=\Bbbk[x_0, x_1, x_2]$
over a field $\Bbbk$. Let $\BP^2_\Bbbk$ be a projective 2-space over $\Bbbk$.
Assume that $\Bbbk$ is the complex field $\bC$ for the simplicity of presentation.\footnote{Since the theory is algebraic, our algorithm has a chance to be applied to more general fields, e.g. the $p$-adic Dwork cohomology defined over a $p$-adic field.}
Let $X=X_G(\bC)$ (respectively, $\BP^2=\BP^2_\bC(\bC)$) and  be the complex analytic space (the compact Riemann surface)
associated to $X_G$ (respectively, $\BP^2_\bC$). 
For each $i =0,1,2$, set
$$
G_i := \frac{\partial G(\ud x)}{\partial x_i}, \quad \cU_i = \{ \ud x \in \BP^2 : G_i(\ud x) \neq 0\}=\BP^2 \setminus X_{G_i}(\bC).
$$
Then the Griffiths theorem, \cite{Gr69}, asserts a canonical isomorphism between the middle-dimensional singular cohomology $H^1(X,\bC)$ and the degree $\deg G-3$ and $2\deg G-3$ part of the quotient $\bC$-vector space $\bC[\ud x]/J_G$ where $J_G$ is the Jacobian ideal of $\bC[\ud x]$ generated by $G_0, G_1, G_2$.
If we let $U_0,U_1 \in \bC[\ud x]$ correspond to singular cocycles $W_0,W_1$ (whose cohomology class $[W_i] \in H^1(X,\bC)$) respectively under the Griffiths isomorphism, then the theorem 2, \cite{CG}, says that $[W_0 \cup W_1]$ vanishes in $H^2(X,\bC)$ if and only if
$$
\text{
$U_0 U_1$ belongs to the Jacobian ideal $J_G= \langle G_0(\ud x), G_1(\ud x), G_2(\ud x) \rangle$.
}
$$

Our main result will be in the following shape: if $U_0, U_1, U_2 \in \bC[\ud x]$ correspond to
$W_0,W_1,W_2$ such that $U_0U_1, U_1U_2 \in J_G$ (i.e. $U_0,U_1,U_2$ satisfies the condition to define a Massey triple product), then the Massey triple product $[W_0,W_1,W_2]$ under a particular non-trivial defining system (i.e. a choice of $W_{01}$ and $W_{12}$: see Proposition \ref{wtwo}) vanishes in $H^2(X,\bC)$ 
if and only if \par
\vspace{0.5em}
a certain explicit quantity (see the quantity $A-B$ in Theorem \ref{mt}) 
belongs to the ideal generated by $G_0(\ud x)^2, G_1(\ud x)^2, G_2(\ud x)^2$.

\begin{rem}
The similarity of vanishing criterions of the cup product and the triple Massey product suggests us to conjecture that the $n$-Massey product vanishes (under a particular non-trivial defining system) if and only if some explicit quantity belongs to the ideal generated by $G_0(\ud x)^{n-1}, G_1(\ud x)^{n-1}, G_2(\ud x)^{n-1}$. But due to technical difficulties arising in the computation of higher ($n\geq 4$) Massey products, we were not able to provide such general vanishing criterion.
\end{rem}

We consider the Cech-deRham double complex of $X$
for the open cover $\cU =\{ \cU_i\}_{i=0,1,2}$:
$$
\cC^{p}_{q} := \cC^q (\cU |_X, \Omega_X^p),
$$
where $\Omega_X^p$ is the sheaf of \textit{holomorphic} differential $p$-forms on $X$.
We will compute the Massey triple product for the first degree cohomology of the associated single complex (so called, the hypercohomology) with respect to the total differential
$$
D = d+(-1)^p \d,
$$

where $d$ is the exterior differential and $\d$ is the Cech differential.
Remark that the smoothness of the $X_G$ is essential, since otherwise $\cU$ do not cover $\BP^2$. We shall refer to $\cU$ as the Jacobian cover of $\BP^2$ relative to $G(\ud x)$.

Let $\bH^\bullet(X_G)=\bigoplus_{p+q=\bullet} H^p(\cU|_X, \O_X^q)$ be the hypercohomology of the Cech-deRham complex of $X_G$ with respect to the Jacobian cover $\cU$.
Note that the hypercohomology $\bH^\bullet(X_G)$ of this associated single complex is isomorphic to the singular cohomology $H^\bullet(X, \bC)$.
Let us briefly recall the natural twisted product structure on the Cech-deRham complex. Let $\wedge$ denote the natural product
$$
\cC^p_q \times \cC_s^r \to \cC^{p+r}_{q+s}.
$$
given by the "front q-face, back s-face" rule, followed by exterior multiplication of forms. Then the twisted product
 $$
 \a_q^p\bullet \a_s^r:= (-1)^{qr} \a_q^p \wedge \a_s^r 
 $$
is skew-commutative and satisfies the Leibnitz rule:
 \begin{eqnarray*}
  \a_s^r \bullet \a_q^p &=&(-1)^{(p+q)(s+r)} \a_q^p\bullet \a_s^r, \\
 D( \a_q^p\bullet \a_s^r) &=& D(\a_q^p)\bullet \a_s^r + (-1)^{p+q}\a_q^p \bullet D(\a_s^r).
 \end{eqnarray*}
 
%
 
Because the twisted product reduces to exterior multiplication of forms, it represents the topological cup-product on the level of hypercohomology. Therefore, the Massey triple product for the hypercohomology $\bH^1(X)$ gives the Massey triple product for $H^1(X, \bC)$.

Let $w_0, w_1, w_2 \in \bH^1(X_G)$ such that $w_0 \cup w_1 = w_1 \cup w_2=0$ in $\bH^2(X_G)$.
More precisely, we consider $W_0, W_2 \in \cC_1^0$ and $W_1 \in \cC_0^1$ with the associated cohomology class $[W_i] =w_i$ such that  
$$D(W_{01}) = W_0 \bullet W_1, \quad D (W_{12}) = W_1 \bullet W_2 \quad \text{in } \cC_1^1
$$ 
for some $W_{01}, W_{12} \in \cC_0^1$. Note that $[W_0 \bullet W_1]=w_0 \cup w_1$ and $[W_1 \bullet W_2]=w_1 \cup w_2$.
Then we can form the Massey triple product:
$$
[ w_0, w_1, w_2 ] := [W_0 \bullet W_{12} + W_{01} \bullet W_2] \in  H^1(\cU|_X, \O_{X}^1).
$$
Since there is an isomorphism $\theta: H^1(\cU|_X, \O_{X}^1) \buildrel \simeq \over \longrightarrow \bC$, we can define a $\bC$-valued Massey triple product:
$$
\langle w_0, w_1, w_2 \rangle := \theta([ w_0, w_1, w_2 ]) \in \bC.
$$

We will find explicit formulas for a non-trivial defining system $A=\{W_0, W_1, W_2, W_{01}, W_{12}\}$ and the corresponding $\bC$-valued Massey triple product, which can be implemented in computer algebra system. We will also show (see Corollary \ref{indep}) that vanishing of the Massey triple product does not depend on choices of cohomology representatives of $[W_0],[W_1],[W_2]$ under our particular non-trivial defining system.


\subsection{Acknowledgement}
Jeehoon Park would like to thank Minhyong Kim for a useful related discussion and this project was motivated from the conversation with him.
The work of Jeehoon Park was partially supported by BRL (Basic Research Lab) through the NRF (National Research Foundation) of South Korea (NRF-2018R1A4A1023590).

\section{Main computations}

\subsection{Griffiths' description for $H^1(X,\bC)$}
We introduce a new variable $y$ and consider the polynomial ring 
$\Bbbk[x_0, x_1, x_2, y]$. Let us define the charge (additive) grading on $\Bbbk[x_0,x_1,x_2,y]$:
$$
ch(x_i) =1, ch(y)=-\deg G.
$$
Then $\Bbbk[x_0,x_1,x_2,y]$ has charge decomposition:
$$
\Bbbk[x_0,x_1,x_2,y] =\bigoplus_{ch} \Bbbk[x_0,x_1,x_2,y]_{ch} .
$$
Let $S(y, \ud x) = y G(\ud x)$ and $c_X:= \deg G- 3$. 
Let $J_S$ be the Jacobian ideal of $\Bbbk[x_0, x_1, x_2, y]$ generated by
$\frac{\partial S(\ud x) }{\partial x_i}, \ i = 0,1,2$, and $\frac{\partial S(\ud x) }{\partial y}$
Then there is a canonical isomorphism due to Griffiths,  \cite{Gr69},
(for example, see the Proposition 4.10 and section 4 of \cite{PP16} for a detailed explanation)
\begin{eqnarray*} 
\Bbbk[x_0,x_1,x_2,y]_{c_{X}} /J_S \cap \Bbbk[x_0,x_1,x_2,y]_{c_{X}}\simeq H^2(\BP^2\setminus X) \xrightarrow{\res_X} H^1(X,\bC),
\end{eqnarray*}
where the first map is given by 
$$
y^{k-1} u(\ud x) \mapsto (-1)^{k-1} (k-1)! \frac{u(\ud x)}{ G(\ud x)^k} \cdot( x_0 dx_1 \wedge dx_2 - x_1 dx_0 \wedge dx_2 + x_2 dx_0 \wedge dx_1), \quad k \geq 1
$$
and $\res_X$ is the residue map (e.g. see p 298, \cite{PP16} for its precise description).
Let $J_G$ be the Jacobian ideal of $\Bbbk[\ud x]=\Bbbk[x_0, x_1, x_2]$ generated by
$\frac{\partial G(\ud x) }{\partial x_i}, \ i = 0,1,2$. 
Then the inclusion $\Bbbk[\ud x] \hookrightarrow \Bbbk[\ud x, y]$ induces an isomorphism
\begin{eqnarray}\label{Gisom}
 \bigoplus_{m\geq 0}\Bbbk[\ud x]_{m\deg G +c_X }/ J_G \cap  \Bbbk[\ud x]_{m\deg G+c_X} \simeq \Bbbk[\ud x,y]_{c_{X}} /J_S \cap \Bbbk[\ud x,y]_{c_{X}},
\end{eqnarray}
where $\Bbbk[\ud x]_\a$ is the submodule of $\Bbbk[\ud x]$ consisting of homogeneous polynomials of degree $\a$.
Note that $\Bbbk[\ud x]_{m\deg G +c_X }=\Bbbk[\ud x]_{(m+1)\deg G -3}$.
Let us consider the Hodge decomposition $H^1(X,\bC) \simeq H^0(X, \Omega_X^1) \oplus
H^1(X, \Omega_X^0).$
Then it is known (the pole order filtration with respect to $G(\ud x)$ is sent to the Hodge filtration under $\res_X$ due to Griffiths, Theorem 8.3, \cite{Gr69}) that 
\begin{eqnarray*}
\Bbbk[\ud x]_{\deg G -3 }/ J_G \cap\Bbbk[\ud x]_{\deg G-3}  &\simeq& H^0(X, \Omega_X^1),\\
 \bigoplus_{1\leq m\leq 2}\Bbbk[\ud x]_{m\deg G -3 }/ J_G \cap \Bbbk[\ud x]_{m\deg G -3 }
&\simeq& H^0(X, \Omega_X^1) \oplus
H^1(X, \Omega_X^0),\\
\Bbbk[\ud x]_{m\deg G -3 }/ J_G \cap\Bbbk[\ud x]_{m\deg G-3} &=&0, \quad \text{if } m \geq 3 \quad \text{ (Macaulay's theorem)},
\end{eqnarray*}
under the isomorphism from $ \bigoplus_{m\geq 0}\Bbbk[\ud x]_{(m+1)\deg G -3 }/ J_G \cap  \bigoplus_{m\geq 0}\Bbbk[\ud x]_{(m+1)\deg G -3} $ to $H^1(X,\bC)$.
Consider the multiplication map followed by the projection to degree $3\deg G-6$-part:
\small{
$$
 \bigoplus_{1\leq m\leq 2} \Bbbk[\ud x]_{m\deg G -3 }/ J_G \cap\Bbbk[\ud x]_{m\deg G-3} \times
 \bigoplus_{1\leq m\leq 2}\Bbbk[\ud x]_{m\deg G -3 }/ J_G \cap\Bbbk[\ud x]_{m\deg G-3}
\to
\Bbbk[\ud x]_{3\deg G -6 }/ J_G \cap\Bbbk[\ud x]_{3\deg G-6}.
$$
}
In \cite{CG}, it is shown that this multiplication matches with the cup product
$$
 H^1(X,\bC) \times H^1(X,\bC)  \to H^2(X, \bC) \simeq \bC
$$
under the above isomorphism and the isomorphism (the corollary (iv), p 19, \cite{CG})
$$
\Bbbk[\ud x]_{3\deg G -6 }/ J_G \cap\Bbbk[\ud x]_{3\deg G-6} \simeq \bC.
$$


\subsection{Main computations}
Fix a nonzero section of $\Omega^{2}_{\BP^2}(3)\simeq \cO_{\BP^2}$, say,
$$
\Omega :=x_0 dx_1 \wedge dx_2 - x_1 dx_0 \wedge dx_2 + x_2 dx_0 \wedge dx_1.
$$
Given a vector field $Z$ on $\Bbbk^3$, let $i(Z)$ denote the operation of contraction, and 
let $K_j = i(\frac{\partial}{\partial x_j})$. Given a multi-index $J = (j_0,...,j_q)$ of size $q$, let
$$
K_J = K_{j_q} \cdots K_{j_0}, \quad  \Omega_J = K_J \Omega, \quad G_J=G_{j_0} \cdots G_{j_q}.
$$ 

Let $w_0=[W_0],w_2=[W_2]$ correspond to anti-holomorphic cohomology classes and $w_1=[W_1]$ correspond to a holomorphic cohomology class.\footnote{There are $2^3=8$ choices for $\{w_0,w_1,w_2\}$ depending on $w_i$ holomorphic or anti-holomorphic. See Table \ref{tp} for an explanation why we choose this way.} 
Then we can express the Cech-deRham cocycle $W_i$ with respect to the open covering $\cU$ as
\begin{eqnarray} \label{wone}
W_0 &= &\{\frac{U_0}{G_{01}}{\O_{01}}, \frac{U_0}{G_{02}}{\O_{02}},\frac{U_0}{G_{12}}{\O_{01}}  \} \in \cC^0_1,\\ \nonumber
W_1 &=& \{\frac{U_1}{G_0}{\O_{0}}, \frac{U_1}{G_1}{\O_{1}},\frac{U_1}{G_2}{\O_{2}}  \}\in \cC^1_0,\\ \nonumber
W_2 &=& \{\frac{U_2}{G_{01}}{\O_{01}}, \frac{U_2}{G_{02}}{\O_{02}},\frac{U_2}{G_{12}}{\O_{01}}  \} \in  \cC^0_1, \nonumber
\end{eqnarray}

for some polynomials $U_i \in \Bbbk[\ud x]$ with $\deg U_0=\deg U_2 = 2\deg G -3$ and $\deg U_1 = \deg G -3$. Then

$$ 
W_0 \bullet W_1 =(-1)^1 \{ \frac{U_0U_1}{G_{0}G_1^2}{\O_{01}\O_1}, 
\frac{U_0U_1}{G_{0}G_2^2}{\O_{02}\O_2},
\frac{U_0U_1}{G_{1}G_2^2}{\O_{01}\O_2}\} \in \cC_1^1.
$$
$$ 
W_1 \bullet W_2 = \{ \frac{U_1U_2}{G_{0}G_1^2}{\O_{01}\O_1}, 
\frac{U_1U_2}{G_{0}G_2^2}{\O_{02}\O_2},
\frac{U_1U_2}{G_{1}G_2^2}{\O_{01}\O_2}\} \in \cC_1^1.
$$
%
The following theorem is one of the main technical results in \cite{CG}.
\begin{thm} \label{tcg}
The cohomology class $w_i \cup w_j=[W_i \bullet W_j]$ of the cocycle $W_i \bullet W_j$ is trivial 
if and only if $U_i U_j $ belongs to the Jacobian ideal $J_G$ of $G(\ud x)$.
\end{thm}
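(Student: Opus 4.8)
The plan is to deduce this from the Carlson-Griffiths description of the cup product recalled above, after identifying the explicit Cech-deRham cocycles $W_i$ of \eqref{wone} with the correct Griffiths representatives of $w_i$. First I would record the two facts already assembled above: the twisted product $\bullet$ represents the topological cup product on $\bH^\bullet(X_G)\cong H^\bullet(X,\bC)$, and under the Griffiths isomorphism the class $w_i$ corresponds to $U_i\bmod J_G$, with $\deg U_0=\deg U_2=2\deg G-3$ and $\deg U_1=\deg G-3$. Hence for the pairs $\{i,j\}$ that occur in a triple product (namely $\{0,1\}$ and $\{1,2\}$) we have $\deg(U_iU_j)=3\deg G-6$. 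Now combine the Carlson-Griffiths theorem --- that the multiplication map $\bigoplus_{1\le m\le 2}\bC[\ud x]_{m\deg G-3}/(J_G\cap\,\cdot)\,\times\,\bigoplus_{1\le m\le 2}\bC[\ud x]_{m\deg G-3}/(J_G\cap\,\cdot)\longrightarrow\bC[\ud x]_{3\deg G-6}/(J_G\cap\,\cdot)$, each quotient being taken against the relevant graded piece, matches the cup product --- with the isomorphism $\bC[\ud x]_{3\deg G-6}/(J_G\cap\,\cdot)\xrightarrow{\simeq}\bC$. This shows that $\theta\bigl([W_i\bullet W_j]\bigr)$ equals, up to the fixed nonzero scalar implicit in $\theta$ and in this last isomorphism, the image of $U_iU_j$ in $\bC[\ud x]_{3\deg G-6}/(J_G\cap\,\cdot)$; since the latter quotient maps isomorphically onto $\bC$, the class $[W_i\bullet W_j]$ vanishes in $\bH^2(X_G)$ if and only if $U_iU_j\in J_G$. (For the remaining pair $\{0,2\}$, $\deg(U_0U_2)=4\deg G-6$ exceeds the socle degree $3\deg G-6$ of the Jacobian ring, so both sides vanish automatically and the statement holds in full generality.)

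The step I expect to be the main obstacle is the bookkeeping needed to certify that the cocycles $W_i$ written in \eqref{wone} --- with the products $G_{01},G_{02},G_{12}$ and $G_0,G_1,G_2$ in the denominators and the contracted forms $\Omega_J=K_J\Omega$ in the numerators --- are indeed the Cech-deRham representatives of $w_i$ attached to $U_i$ under $\res_X$. This amounts to unwinding how one passes from a Griffiths residue $\frac{u(\ud x)}{G(\ud x)^k}\Omega$ to a Cech-deRham cocycle for the Jacobian cover, keeping track of the factorials and signs in the residue formula, exactly along the lines of \cite{CG}; once it is in place, the ``only if'' direction is immediate from the nondegeneracy above. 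For a fully explicit ``if'' direction it is convenient to also exhibit, from any expression $U_iU_j=\sum_k A_kG_k$, an element $\eta\in\cC_0^1\oplus\cC_1^0$ with $D\eta=W_i\bullet W_j$; this is precisely the calculation that yields $W_{01}$ and $W_{12}$ in Proposition \ref{wtwo}, so one may either carry it out here or simply invoke that proposition.
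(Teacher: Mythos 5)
Your argument is correct and is essentially the paper's own approach: the paper offers no independent proof of Theorem \ref{tcg}, simply citing \cite{CG}, and the chain of identifications you assemble (twisted product $=$ cup product on hypercohomology, Griffiths' isomorphism sending $w_i$ to $U_i \bmod J_G$, the Carlson--Griffiths matching of the cup product with Jacobian-ring multiplication, and the one-dimensionality of the socle $\Bbbk[\ud x]_{3\deg G-6}/J_G\cap\Bbbk[\ud x]_{3\deg G-6}$) is exactly the recalled material in Section 2.1 that the citation rests on. Your added observation about the pair $\{0,2\}$ (degree $4\deg G-6$ forces $U_0U_2\in J_G$ by Macaulay, matching the automatic vanishing of $H^{0,1}\cup H^{0,1}$ on a curve) is a correct and harmless supplement.
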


By the above theorem, if $U_0 U_1 = \sum_{i=0}^2 R_i^{(01)} G_i$ and $U_1 U_2 = \sum_{i=0}^2 R_i^{(12)} G_i$ for $R_i^{(01)}, R_i^{(12)} \in \Bbbk[\ud x]$, then
$W_0 \bullet W_1$ and $W_1 \bullet W_2$ are coboundaries. 
Our first observation is that there is a simple cochain level formula for $W_{01}$ such that $D(W_{01})=W_1 \bullet W_2$: A simple Cech type computation confirms the following proposition.
\begin{prop}\label{wtwo}
If we define
\begin{eqnarray*}
W_{01}:=\{\frac{x_1R_2^{(01)}- x_2R_1^{(01)}}{G_{0}^2}{\O_{0}}, \frac{-x_0 R_2^{(01)} +x_2 R_0^{(01)}}{G_{1}^2}{\O_{1}},\frac{x_0 R_1^{(01)} - x_1 R_0^{(01)}}{G_{2}^2}{\O_{2}}  \} \in \cC_0^1,
\end{eqnarray*}
then $D (W_{01}) = W_1 \bullet W_2$ in $\cC_1^1$. Similarly, if we define
$$
W_{12}=\{-\frac{x_1R_2^{(12)}- x_2R_1^{(12)}}{G_{0}^2}{\O_{0}}, -\frac{-x_0 R_2^{(12)} +x_2 R_0^{(12)}}{G_{1}^2}{\O_{1}},-\frac{x_0 R_1^{(12)} - x_1 R_0^{(12)}}{G_{2}^2}{\O_{2}}  \} \in \cC_0^1,
$$
then $D (W_{12}) = W_2 \bullet W_3$ in $\cC_1^1$.
\end{prop}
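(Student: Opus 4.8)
The plan is to verify that $D(W_{01})=W_0\bullet W_1$ (and, symmetrically, $D(W_{12})=W_1\bullet W_2$) by a direct computation in the Čech--de Rham double complex, using the dimension bound of a curve together with two structural identities on $X$: the Euler relation and the vanishing of $dG$ along $X$.

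First I would note that $W_{01}\in\cC_0^1$, so $d(W_{01})\in\cC^2(\cU|_X,\Omega_X^2)=0$ because $\dim X=1$; consequently $D(W_{01})=-\delta(W_{01})\in\cC_1^1$, and the whole statement collapses to evaluating a single Čech coboundary on the three double overlaps $\cU_{ij}=\cU_i\cap\cU_j$ restricted to $X$. On $\cU_{ij}$ the two components $W_{01}^{(i)}$ and $W_{01}^{(j)}$ are multiples of the distinct $1$-forms $\Omega_i$ and $\Omega_j$, so to subtract them I would first record the explicit contractions $\Omega_0=x_2\,dx_1-x_1\,dx_2$, $\Omega_1=x_0\,dx_2-x_2\,dx_0$, $\Omega_2=x_1\,dx_0-x_0\,dx_1$, $\Omega_{01}=x_2$, $\Omega_{02}=-x_1$, $\Omega_{12}=x_0$, and then invoke the basic syzygy $G_j\,\Omega_i=G_i\,\Omega_j$ valid after restriction to $X$ — the very identity that makes $W_1$ a $D$-cocycle — which follows from $\sum_k G_k\,dx_k\equiv 0$ on $X$ together with the Euler relation $\sum_k x_k G_k=(\deg G)\,G$. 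Using this syzygy to rewrite $W_{01}^{(i)}$ in terms of $\Omega_j$ and subtracting turns the claim on $\cU_{ij}$ into a polynomial identity in the numerators.

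Carrying this out on $\cU_{01}$ as the representative overlap, the numerator of $(\delta W_{01})_{01}$ over $G_0G_1^2$ comes out as $-R_2^{(01)}(x_0G_0+x_1G_1)+x_2(R_0^{(01)}G_0+R_1^{(01)}G_1)$; here I would substitute $x_0G_0+x_1G_1=-x_2G_2$ (Euler on $X$) and $R_0^{(01)}G_0+R_1^{(01)}G_1=U_0U_1-R_2^{(01)}G_2$ (the defining relation $U_0U_1=\sum_i R_i^{(01)}G_i$ supplied by Theorem~\ref{tcg}). The two $\pm x_2G_2R_2^{(01)}$ terms cancel, the numerator collapses to $x_2\,U_0U_1=\Omega_{01}\,U_0U_1$, and hence $(\delta W_{01})_{01}=\frac{U_0U_1}{G_0G_1^2}\Omega_{01}\Omega_1$, which matched against the displayed expansion of $W_0\bullet W_1$ — remembering the twist sign $(-1)^{qr}=(-1)^1$ attached to $\bullet$ — yields $D(W_{01})=-\delta(W_{01})=W_0\bullet W_1$ on $\cU_{01}$. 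The overlaps $\cU_{12}$ and $\cU_{02}$ run identically; the only difference is that $x_k=\Omega_{ij}$ on $\cU_{01}$ and $\cU_{12}$ whereas $x_1=-\Omega_{02}$ on $\cU_{02}$, and one checks this flip is precisely compensated by the sign already carried by the third component of $W_{01}$, so the outcome is uniform. For $W_{12}$ I would repeat the computation verbatim with $R_i^{(12)}$ and $U_1U_2$ in place of $R_i^{(01)}$ and $U_0U_1$; the extra overall minus built into the definition of $W_{12}$ is exactly what is needed to absorb the change in twist sign and produce $D(W_{12})=W_1\bullet W_2$.

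The main obstacle is sign bookkeeping: one must keep simultaneous track of the sign in $D=d+(-1)^p\delta$, the twist sign $(-1)^{qr}$ in $\alpha\bullet\beta$, the orientation sign of the Čech coboundary, and the signs emitted by the contractions $K_J\Omega$, and verify that they conspire on all three overlaps. Beyond that, every step is forced — either by the two structural identities (Euler and $dG\equiv 0$ on $X$) or by the definition of the $R^{(01)}_i$ — and it is precisely the use of the Jacobian cover, hence the smoothness of $X_G$, that allows those identities to be applied after restriction to $X$.
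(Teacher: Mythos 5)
Your proof is correct and follows essentially the same route as the paper: both reduce the claim to $d(W_{01})=0$ (since $\cC_0^2=0$ for a curve) plus the single-overlap Čech identity, which is then verified from the Euler relation $\sum_k x_kG_k=(\deg G)G$, the relation $\sum_k G_k\,dx_k\equiv 0$ on $X$, and the decomposition $U_0U_1=\sum_i R_i^{(01)}G_i$; your use of the syzygy $G_j\Omega_i=G_i\Omega_j$ to put both terms over $\Omega_1$ before collapsing the numerator is just a cleaner organization of the same subtraction the paper carries out by brute force. (Note the proposition's statement contains typos — it should read $D(W_{01})=W_0\bullet W_1$ and $D(W_{12})=W_1\bullet W_2$ — and you correctly proved the intended identities.)
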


\begin{proof}
Because $\O_X^2$ is the sheaf of holomorphic differential 2-forms on $X$ and $\dim_{\bC} X=1$, 
$\cC_0^2=0$. Thus $d(W_{01}) = 0 \in \cC_0^2$ and it is enough to check that $\d(W_{01}) = W_0 \bullet W_1$.
Then this reduces to proving the following identity:
\begin{eqnarray} \label{Cech}
-\frac{U_0U_1}{G_{0}G_1^2}{\O_{01}\O_1}&=&\frac{x_1R_2^{(01)}- x_2R_1^{(01)}}{G_{0}^2}{\O_{0}} - \frac{-x_0 R_2^{(01)} +x_2 R_0^{(01)}}{G_{1}^2}{\O_{1}},
\end{eqnarray}
since the desired identities for other indices will follow from the same computation by the symmetry of indices.
Note that
$$
\O_0=x_2 dx_1-x_1 dx_2, \quad \O_1= x_0dx_2-x_2dx_0, \quad \O_{01}=x_2.
$$
We will use the following identities to prove (\ref{Cech}), which follows from the defining equation $G(\ud x) =0$ of $X_G$:
$$
x_0G_0+x_1G_1 +x_2G_2=\deg(G) \cdot G(\ud x)=0, \quad G_0 dx_0 + G_ 1 dx_1 + G_2 dx_2=0.
$$

\begin{eqnarray*}
RHS &=&\frac{x_1R_2^{(01)}- x_2R_1^{(01)}}{G_{0}^2}{\O_{0}} - \frac{-x_0 R_2^{(01)} +x_2 R_0^{(01)}}{G_{1}^2}{\O_{1}}
 \\
&=& \frac{-(-x_0 R_2^{(01)} +x_2 R_0^{(01)})G_0^2 \O_1+ (x_1R_2^{(01)}- x_2R_1^{(01)})G_1^2\O_0}{G_0^2G_{1}^2}\\
&=& \frac{-(-x_0 R_2^{(01)} +x_2 R_0^{(01)})G_0^2 (x_0dx_2-x_2dx_0)+ (x_1R_2^{(01)}- x_2R_1^{(01)})G_1^2(x_2 dx_1-x_1 dx_2)}{G_0^2G_{1}^2}\\
\end{eqnarray*}

%

\begin{eqnarray*}
LHS &=&-\frac{U_1U_2G_0 \O_{01}\O_1}{G_{0}^2G_1^2}
=-\frac{(R_0^{(01)}G_0 + R_1^{(01)}G_1 +R_2^{(01)}G_2)G_0 \O_{01}\O_1}{G_{0}^2G_1^2}\\
&=& -\frac{(R_0^{(01)}G_0 + R_1^{(01)}G_1 +R_2^{(01)}G_2)G_0 x_2(x_0dx_2-x_2dx_0)}{G_{0}^2G_1^2} \\
&=& -\frac{(R_0^{(01)}G_0 + R_1^{(01)}G_1 +R_2^{(01)}G_2) x_2(x_0G_0)dx_2
-(R_0^{(01)}G_0 + R_1^{(01)}G_1 +R_2^{(01)}G_2)x_2^2(G_0dx_0)}{G_{0}^2G_1^2} 
\end{eqnarray*}


\begin{eqnarray*}
& &(G_0^2 G_1^2) (LHS-RHS) \\
&=&R_2^{(01)} (x_1x_2G_1G_2 +x_1^2G_1^2 -x_0 G_0x_0G_0)dx_2  - R_2^{(01)}( x_2^2G_1G_2+x_1x_2G_1^2)dx_1+R_2^{(01)}x_0x_2G_0G_0 dx_0     \\
&=&R_2^{(01)} (x_1x_2G_1G_2 +x_1^2G_1^2 -x_0 G_0(-x_1G_1-x_2G_2))dx_2 \\
&& - R_2^{(01)}( x_2^2G_1G_2+x_1x_2G_1^2)dx_1+R_2^{(01)}x_0x_2G_0(-G_1dx_1-G_2dx_2)     \\
&=& R_2^{(01)} x_1 G_1 (x_2G_2 + x_1G_1 +x_0G_0) dx_2  - R_2^{(01)} x_2 G_1 (x_2G_2 + x_1G_1 +x_0G_0) dx_1=0.
\end{eqnarray*}

\end{proof}

The $\Bbbk$-valued Massey triple product $\langle w_0, w_1, w_2 \rangle$ of $w_0,w_1,$ and $w_2$ is, by definition, the image of the cohomology class $[W_{01}\bullet W_2 + W_{0}\bullet W_{12}]
\in H^2(X_G) \simeq H^2 ( \oplus_{p+q=\bullet}\cC_q^p, D)$ under an isomorphism $\theta: H^2(X_G) \buildrel \simeq \over \to \Bbbk$.

Let $\eta: X_G \to \BP^2$ be a given closed embedding. In order to compute $\langle w_0, w_1, w_2 \rangle$, we use the connecting coboundary map induced from the Poincare residue exact sequence of sheaves on $\BP^2$:
\begin{eqnarray}\label{pres}
0 \to \Omega^\bullet_{\BP^2} \to \Omega^\bullet_{\BP^2}(\log X_G) \mapto{\res} \eta_* \Omega_{X_G}^{\bullet-1} \to 0,
\end{eqnarray}
where $\Omega^p_{\BP^2}$ is a sheaf of regular $p$-forms on $\BP^2$ and $\Omega^p_{\BP^2}(\log X_G)$
is a sheaf of rational $p$-forms $\omega$ on $\BP^2$ such that $\o$ and $d \o$ are regular on $\BP^2\setminus X$ and have at most a pole of order one along $X$. See the page 444 in \cite{PS} for more details.
In \cite{CG}, the authors gave an explicit formula of the coboundary map $\tilde\d$ in the Poincar\'e residue sequence induced from \ref{pres}
$$
\tilde\d: H^1 (\cU |_{X_G}, \O_{X_G}^1) \to H^2 (\BP^2, \O_{\BP^2}^2).
$$

For the computation of $\tilde \d$, we need to compute the lift of
$W_{01}\bullet W_2 + W_{0}\bullet W_{12}$, namely, 
$$
(W_{01}\bullet W_2 + W_{0}\bullet W_{12})\wedge \frac{dG}{G}
\in \cC^1(\cU, \O^2_{\BP^2} (\log X_G))
,
$$
and then apply the Cech coboundary map $\d$:
$$
\left[\d \left((W_{01}\bullet W_2 + W_{0}\bullet W_{12})\wedge \frac{dG}{G}  \right)\right] = \tilde \d( [W_{01}\bullet W_2 + W_{0}\bullet W_{12}])
\quad \text{ inside } H^2 (\BP^2, \O_{\BP^2}^2).
$$

Following \cite{CG}, consider the complexes $\cC_\ell^\bullet(\cU, \O_{\BP^2}^2)$ where a typical cochain $\phi$ has the form
$
\frac{R_{012}\Omega}{G_{012}^\ell}
$
where $R_{012} \in \Bbbk[\ud x]$ with $\deg (R_{012})=3\ell(\deg G -1)-3$.
If one denotes the cohomology groups of this complex by $H_\ell^\bullet(\BP^2, \O_{\BP^2}^2)$, then one has
$$
H^\bullet(\BP^2, 
\O_{\BP^2}^2) = \mathop{\lim_{\longrightarrow}} H^\bullet_\ell (\BP^2, 
\O_{\BP^2}^2),
$$
where the map from $\cC_\ell^\bullet(\cU, \O_{\BP^2}^2)$ to $\cC_{\ell+1}^\bullet(\cU, \O_{\BP^2}^2)$ is given by multiplication of the
numerator in the cocyle against $G_{012}$.
According to the lemma (\cite{CG}, p18), the Grothedieck residue map (see \ref{GR} our explicit 
normalization of $\res_0$)
$$
\res_0: H^2_\ell(\BP^2, \O_{\BP^2}^2) \longrightarrow \Bbbk
$$
is an isomorphism which commutes with multiplication by $G_{012}$.
If we let $\theta = \res_0 \circ \tilde \d$, then
$$
\langle w_0, w_1, w_2 \rangle = \res_0 \left( \tilde \d( [W_{01}\bullet W_2 + W_{0}\bullet W_{12}])  \right).
$$

For simplicity, we use the following notation:
\begin{eqnarray} \label{nt}
A^{(ij)}_0:=x_1 R_2^{(ij)} - x_2 R_1^{(ij)}, \quad A^{(ij)}_1:=-x_0 R_2^{(ij)} + x_2 R_0^{(ij)},
\quad A^{(ij)}_2:= x_0 R_1^{(ij)} - x_1 R_0^{(12)},
\end{eqnarray}
for $(ij)=(01)$ or $(12)$. One can easily check that
$$
\frac{\O_{ij}\O_i}{G_i}= \frac{\O_{ij}\O_j}{G_j}.
$$
for $i, j=0,1,2$ with $i < j$. Then the above proposition \ref{wtwo} says that
\begin{eqnarray}\label{keycomp}
-\frac{U_kU_\ell \O_{ij}\O_j}{G_{ij} G_j}= -\frac{U_kU_\ell \O_{ij}\O_i}{G_{ij} G_i}=
(-1)^i\frac{A_i^{(k\ell)}\O_i}{G_i^2} +(-1)^j\frac{A_j^{(k\ell)}\O_j}{G_j^2}
\end{eqnarray}
for $(k, \ell) =(0,1)$ or $(1,2)$ and $i, j=0,1,2$ with $i < j$.

\begin{lem} \label{res}
We have the following formula
\begin{eqnarray*}
\d \left((W_{01}\bullet W_2 + W_{0}\bullet W_{12})\wedge \frac{dG}{G}\right) =\deg G\cdot \frac{ A -B}{G_{012}^2}\cdot \Omega \in \cC_2^2(\cU, \O_{\BP^2}^2),
\end{eqnarray*}
where
\begin{eqnarray*}
A&=&U_0 x_1 R_0^{(12)} G_0G_1+U_2x_0R_2^{(01)}G_0G_2 +
U_0x_0 R_0^{(12)}G_0G_0, \\
B&=&A_0^{(01)} U_2 G_1G_2 
+ U_0 A_1^{(12)}G_0G_2
+ U_0U_1U_2 x_0G_0.
\end{eqnarray*}
\end{lem}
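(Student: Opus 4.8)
The plan is to turn the statement into an explicit Cech computation and then collapse the result with the Euler relation $\sum_i x_iG_i=(\deg G)\,G$. First I would feed the formulas in \eqref{wone} and Proposition~\ref{wtwo} (with the shorthand \eqref{nt}) into the two products, using the ``front $q$-face, back $s$-face'' rule together with the twisted sign $(-1)^{qr}$; since $W_{01},W_{12}$ are $\Omega_X^1$-valued Cech $0$-cochains while $W_0,W_2$ are $\Omega_X^0$-valued Cech $1$-cochains, one obtains over the double overlap $\cU_{ij}$ (with $i<j$)
$$
\bigl(W_{01}\bullet W_2+W_0\bullet W_{12}\bigr)_{ij}=\frac{A_i^{(01)}U_2}{G_i^2\,G_{ij}}\,\Omega_{ij}\Omega_i+\frac{U_0\,A_j^{(12)}}{G_j^2\,G_{ij}}\,\Omega_{ij}\Omega_j .
$$
Next I would wedge with $\frac{dG}{G}=\frac1G\sum_k G_k\,dx_k$ and invoke the identity
$$
\Omega_k\wedge dG=-G_k\,\Omega+(\deg G)\,G\,K_k(dx_0\wedge dx_1\wedge dx_2),
$$
an immediate consequence of the Euler relation, which splits each summand into a piece carrying a logarithmic pole along $X_G$ (a multiple of $\Omega/G$) plus a regular piece; finally I would apply the Cech coboundary $\d(\phi)_{012}=\phi_{12}-\phi_{02}+\phi_{01}$.

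The core of the argument is the simplification of the cochain so obtained. After clearing denominators to $G_{012}^2\,G$, the logarithmic part has a polynomial numerator $N$; rewriting the combinations $G_aA_b^{(k\ell)}-G_bA_a^{(k\ell)}$ via the Euler relation and then substituting the defining identities $U_0U_1=\sum_i R_i^{(01)}G_i$ and $U_1U_2=\sum_i R_i^{(12)}G_i$, one checks term by term that $N=(\deg G)\,G\,(A-B)$ with $A$ and $B$ exactly as in the statement --- the scalar $\deg G$ emerging precisely from the replacement of $\sum_i x_iG_i$ by $(\deg G)\,G$, and the leftover factor $G$ being cancellable because $W_{01}\bullet W_2+W_0\bullet W_{12}$ is a $\d$-cocycle on $X$ (it is $D$-closed by associativity of $\bullet$, and $\cC^1(\cU|_X,\Omega_X^2)=0$), so that its logarithmic lift must coboundary to a form which is regular along $X_G$. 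The regular coordinate-form contributions, which after $\d$ are sections of $\Omega^2_{\BP^2}$ over $\cU_{012}$, I would then reorganize --- using once more the Euler relation, the relation $\sum_iG_i\,dx_i=dG$, and the identity $\Omega=\sum_i x_i K_i(dx_0\wedge dx_1\wedge dx_2)$ --- into a multiple of $\Omega$, so that (for the representative chosen as in \cite{CG}, with the lift of pole order at most one along $X_G$) the total simplifies to $\deg G\cdot\dfrac{A-B}{G_{012}^2}\,\Omega\in\cC_2^2(\cU,\Omega^2_{\BP^2})$.

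The one genuine obstacle here is bookkeeping: carrying the orientation signs of $\d$, the twisted-product signs, and the front/back-face assignments simultaneously, while running the Euler relation, the identity $\sum_iG_i\,dx_i=dG$, and the two defining relations for the $R_i^{(k\ell)}$ in an order that (i) genuinely eliminates every factor $1/G$ and lowers the pole order to $2$, and (ii) reproduces the polynomials $A$ and $B$ \emph{on the nose}, not merely modulo the Jacobian ideal. The conceptual skeleton --- products, logarithmic lift, Cech coboundary, and the pole cancellation forced by the cocycle condition --- is short; essentially all of the labor lies in the disciplined expansion and recombination of the resulting terms through the Jacobian-ring relations.
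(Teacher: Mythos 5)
Your proposal follows essentially the same route as the paper: expand the two products over the double overlaps, lift by wedging with $dG/G$ (your identity $\Omega_k\wedge dG=-G_k\,\Omega+(\deg G)\,G\,K_k(dx_0\wedge dx_1\wedge dx_2)$ is precisely the content of the Carlson--Griffiths lemma the paper invokes at this step), apply the Cech coboundary $\delta$, and then use the Euler relation together with $U_0U_1=\sum_iR_i^{(01)}G_i$ and $U_1U_2=\sum_iR_i^{(12)}G_i$ to exhibit the factor $G$ in the numerator and land on $A-B$. The only difference is presentational: the paper establishes the divisibility by $G$ through two explicit intermediate identities for the combinations $G_aA_b^{(k\ell)}-G_bA_a^{(k\ell)}$, whereas you argue it more abstractly from the cocycle condition; both are sound, and the remaining work you defer is the same routine expansion the paper also leaves as a ``straightforward computation.''
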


\begin{proof}

Note that 
$$
W_{01}\bullet W_2 =W_{01} \wedge W_2= \{ 
\frac{A_0^{(01)}U_2}{G_{0}^2G_{12}}{\O_{0}\O_{01}},
\frac{A_0^{(01)}U_2}{G_{0}^2G_{02}}{\O_{0}\O_{02}},
 \frac{A_1^{(01)}U_2}{G_{1}^2G_{12}}{\O_{1}\O_{01}} 
  \}\in \cC_1^1,
$$
and
$$
W_{0}\bullet W_{12} =-W_0 \wedge W_{12}= \{ 
\frac{U_0A_1^{(12)}}{G_{1}^2G_{12}}{\O_{1}\O_{01}},
\frac{U_0A_2^{(12)}}{G_{2}^2G_{02}}{\O_{2}\O_{02}},
 \frac{U_0A_2^{(12)}}{G_{2}^2G_{12}}{\O_{2}\O_{01}} 
  \}\in \cC_1^1.
$$
By using the lemma (\cite{CG}, p 14), we get the following formulae:
$$
(W_{01}\bullet W_2 + W_{0}\bullet W_{12})\wedge \frac{dG}{G}
=\{ S_{01}  \frac{\O}{G},  S_{02}  \frac{\O}{G}, S_{01}\frac{\O}{G}  \}
\in \cC^1(\cU, \O^2_{\BP^2} (\log X_G)),
$$
where 
\begin{eqnarray*}
S_{01} &=& \frac{-A_0^{(01)}U_2 x_2}{G_0G_{12}} + \frac{-U_0A_1^{(12)} x_2}{G_{12}G_1},\\
S_{02} &=& \frac{A_0^{(01)}U_2 x_1}{G_0G_{02}} + \frac{-U_0A_2^{(12)} x_1}{G_{02}G_2},\\
S_{12} &=& \frac{-A_1^{(01)}U_2 x_0}{G_1G_{12}} + \frac{-U_0A_2^{(12)} x_0}{G_{12}G_2}.
\end{eqnarray*}
Therefore $\d \left((W_{01}\bullet W_2 + W_{0}\bullet W_{12})\wedge \frac{dG}{G}\right)$ is equal to
{\small
\begin{eqnarray*}
-\left[\left(\frac{A_0^{(01)}U_2G_1 + U_0A_1^{(12)} G_0}{G_{12}G_{012}} \right) x_2G_2 + 
\left(\frac{A_0^{(01)}U_2G_2 + U_0A_2^{(12)} G_0}{G_{02}G_{012}} \right) x_1G_1  +
\left(\frac{A_1^{(01)}U_2G_2 + U_0A_2^{(12)} G_1}{G_{12}G_{012}} \right) x_0G_0\right] \frac{\O}{G}.
\end{eqnarray*}
}

Using the relation $-x_2 G_2=x_0 G_0 +x_1 G_1-\deg G \cdot G$ in $\BP^2$, 
we continue the computation and 
$$\d \left((W_{01}\bullet W_2 + W_{0}\bullet W_{12})\wedge \frac{dG}{G}\right)$$ 
is equal to
\begin{eqnarray*}
& &\left(\frac{A_0^{(01)}U_2G_1 + U_0A_1^{(12)} G_0}{G_{12}G_{012}} \right) (x_0 G_0 +x_1 G_1-\deg G \cdot G)\frac{\O}{G}   \\
& &-\left(\frac{A_0^{(01)}U_2G_2 + U_0A_2^{(12)} G_0}{G_{02}G_{012}} \right) x_1G_1  
-\left(\frac{A_1^{(01)}U_2G_2 + U_0A_2^{(12)} G_1}{G_{12}G_{012}} \right) x_0G_0 \frac{\O}{G}\\
& =&-\deg G \left( \frac{A_0^{(01)}U_2G_1 + U_0 A_1^{(12)} G_0}{G_{12}G_{012}}\right) \O + \left(\frac{U_0A_1^{(12)} G_0G_2-U_0A_2^{(12)}G_1G_0}{G_{012}^2}  \right)x_1G_1\frac{\O}{G} \\
 & &+ \left(\frac{A_0^{(01)}U_2 G_1G_2-A_1^{(01)}U_2G_2G_0 +
 U_0A_1^{(12)}G_0G_2-U_0A_2^{(12)}G_1G_0}{G_{012}^2} \right)x_0G_0 \frac{\O}{G}.
\end{eqnarray*}

On the other hand, we have that
\begin{eqnarray*}
 &&U_0A_1^{(12)}G_0G_2-U_0A_2^{(12)}G_1G_0\\
 &=& U_0G_0 (A_1^{(12)}G_2 - A_2^{(12)} G_1)\\
 &=& U_0G_0 \left( (-x_0 R_2^{(12)} + x_2 R_0^{(12)})G_2  - (x_0 R_1^{(12)}-x_1 R_0^{(12)}) G_1 \right) \\
  &=& U_0G_0 \left( (x_2G_2 + x_1G_1)R_0^{(12)}  - x_0 (R_2^{(12)}G_2 + R_1^{(12)})G_1)\right) \\
   &=& U_0G_0 \left( \deg G \cdot G \cdot R_0^{(12)} - x_0 (R_0^{(12)}G_0+R_1^{(12)}G_1+R_2^{(12)}G_2)\right) \\
   &=& \deg G \cdot U_0 R_0^{(12)} G_0 G - U_0U_1U_2 x_0 G_0.
\end{eqnarray*}

Similarly, we have that
$$
A_0^{(01)}U_2 G_1G_2-A_1^{(01)}U_2G_2G_0 = 
\deg G \cdot U_2 R_2^{(01)} G_2 G - U_0U_1U_2 x_2 G_2.
$$

By substituting these terms in the equation regarding $\d \left((W_{01}\bullet W_2 + W_{0}\bullet W_{12})\wedge \frac{dG}{G}\right)$, a straightforward computation confirms that the numerator involved is divisible by $G$ and this proves the lemma.
\end{proof}

\section{An explicit algorithm for the Massey triple product}

\subsection{Main theorem}
Let $g$ be the genus of $X$ so that $\dim_\bC H^0(X,\O_X^1)=\dim_\bC H^1(X,\O_X^0)=g$.
We choose all the possible $U_1 \in \Bbbk[\ud x]_{\deg G -3 }$ (modulo $J_G$) and  all the possible $U_0, U_2  \in \Bbbk[\ud x]_{2\deg G -3 }$  (modulo $J_G$) such that 
(this can be done by using a computer algebra program)
\begin{eqnarray} \label{con}
U_0 U_1 = \sum_{i=0}^2 R_i^{(01)} G_i, \quad U_1 U_2 = \sum_{i=0}^2 R_i^{(12)} G_i,
\end{eqnarray}
for $R_i^{(01)}, R_i^{(12)} \in \Bbbk[\ud x]$.
Then $W_0 \bullet W_1$ and $W_1 \bullet W_2$ are coboundaries by Theorem \ref{tcg}.

The Massey triple product is not defined for all the elements of the first singular cohomology group $H^1(X, \bC)$. Choosing $U_0, U_1, U_2$ satisfying \ref{con} amounts to finding a maximal isotropic subspace of $H^1(X, \bC)$.
Define 
$$
A_X:=\{W_0, W_1, W_2, W_{01}, W_{12}\in \cC_0^1 \oplus \cC_1^0   : U_0, U_1, U_2 \text{ satisfies } \ref{con}\},
$$
where $W_i$ are given in \ref{wone} and $W_{ij}$ are given in Proposition \ref{wtwo}.
Then Proposition \ref{wtwo} implies that 
$A_X$ is a defining system (Definition \ref{mase}) for a Massey triple product for $w_i= [W_i]$, $i=0,1,2$.
The key point of Proposition \ref{wtwo} is that it gives an explicitly computable expression for the Cech representative $W_{ij}$ in terms of polynomials in $\Bbbk[\ud x]$.

We compute the Massey triple product of $w_0, w_1, w_2$ with respect to $A_X$ by the formula:
$$
\langle w_0, w_1, w_2 \rangle_{A_X} := \res_0 \left( \tilde \d( [W_{01}\bullet W_2 + W_{0}\bullet W_{12}])  \right),
$$
using
$$
\theta: H^1(\cU|_X, \O_{X}^1)  \xrightarrow{\tilde \delta} H^2_2(\cU|_{\BP^2}, \O_{\BP^2}^2)
 \xrightarrow{\res_0} \bC.
$$
Lemma \ref{res} gives an explicitly computable expression for $\tilde \delta$:
\begin{eqnarray*}
\tilde\delta \left([W_{01}\bullet W_2 + W_{0}\bullet W_{12}]\right) 
=[\deg G\cdot \frac{ A -B}{G_{012}^2}\cdot \Omega] \in H_2^2(\cU|_{\BP^2}, \O_{\BP^2}^2),
\end{eqnarray*}
where $A, B$ are given explicitly in Lemma \ref{res}.

The final thing is to compute $\res_0: H^2_2(\BP^2, \O_{\BP^2}^2) \simeq \Bbbk$ explicitly.
For this, let $J$ be the ideal of $\Bbbk[\ud x]$ generated by $G_0(\ud x)^2, G_1(\ud x)^2, G_2(\ud x)^2.$
Then there is an isomorphism (see Corollary (iv), p 18, \cite{CG})
$$
\left(\Bbbk[\ud x]/J\right)_{6 \deg G -9} \simeq \Bbbk.
$$
Thus, if we put
$$
\mathrm{Det}_G(\ud x):= \det \left( \frac{\partial G_i(\ud x)^2 }{\partial x_j}\right) \in \Bbbk[\ud x]_{6 \deg G -9},
$$
then $\mathrm{Det}_G(\ud x)$ does not belong to $J$ and, for any given $F(\ud x) \in\Bbbk[\ud x]_{6 \deg G -9}$, there is a unique 
$R_F \in \Bbbk$ such that
$$
F(\ud x) =  \sum_{i=0}^2 v_i(\ud x)G_i(\ud x)^2  + R_F \cdot \mathrm{Det}_G(\ud x), 
\quad v_i(\ud x) \in \Bbbk[\ud x].
$$
The fact that $\res_0$ factors through $F\mapsto R_F$ (see Lemma and its proof, p 18, \cite{CG})
$$
\res_0: H^2_2(\BP^2, \O_{\BP^2}^2) 
\simeq \left(\Bbbk[\ud x]/J\right)_{6 \deg G -9} \simeq \Bbbk
$$
implies that, Then 
\begin{eqnarray}\label{GR}
\res_0 \left( \frac{F(\ud x)}{G_{012}^2} \cdot \Omega\right) = R_F.
\end{eqnarray}

So our final formula is the following:
\begin{thm} \label{mt}
Let $w_i=[W_i]$ correspond to $U_i$ satisfying \ref{con}. 
Assuming all the notations so far, we have that
$$
\langle w_0, w_1, w_2 \rangle_{A_X} =
R_{\deg G \cdot (A-B)},
$$
where
\begin{eqnarray*}
A&=&U_0 x_1 R_0^{(12)} G_0G_1+U_2x_0R_2^{(01)}G_0G_2 +
U_0x_0 R_0^{(12)}G_0G_0, \\
B&=&A_0^{(01)} U_2 G_1G_2 + U_0 A_1^{(12)}G_0G_2
+ U_0U_1U_2 x_0G_0.
\end{eqnarray*}
\end{thm}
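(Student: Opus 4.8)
The plan is to assemble the explicit computations already carried out above; no genuinely new ingredient is required. First I would unwind the definition of $\langle w_0, w_1, w_2\rangle_{A_X}$. By Proposition \ref{wtwo} the array $A_X = \{W_0, W_1, W_2, W_{01}, W_{12}\}$ really is a defining system in the sense of Definition \ref{mase}: given $U_0, U_1, U_2$ satisfying \ref{con} with witnesses $R_i^{(01)}, R_i^{(12)}$, the formulas of Proposition \ref{wtwo} produce $W_{01}, W_{12}\in\cC_0^1$ with $D(W_{01}) = W_0\bullet W_1$ and $D(W_{12}) = W_1\bullet W_2$. Hence the Massey cocycle $c(A_X) = W_{01}\bullet W_2 + W_0\bullet W_{12}\in\cC_1^1$ determines a class in $H^1(\cU|_X, \O_X^1)$, and by construction $\langle w_0, w_1, w_2\rangle_{A_X} = \res_0\bigl(\tilde\delta([c(A_X)])\bigr)$. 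So the only task is to evaluate $\res_0(\tilde\delta([c(A_X)]))$ explicitly.

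Second, I would push $[c(A_X)]$ through the two maps in turn, invoking the results recalled above. For $\tilde\delta$, the recipe for the connecting map of the Poincar\'e residue sequence \ref{pres} says that a Cech representative of $\tilde\delta([c(A_X)])$ is $\delta\bigl(c(A_X)\wedge\tfrac{dG}{G}\bigr)$, where $c(A_X)\wedge\tfrac{dG}{G}\in\cC^1(\cU, \O^2_{\BP^2}(\log X_G))$ is the evident lift; Lemma \ref{res} carries out exactly this computation and yields $\delta\bigl(c(A_X)\wedge\tfrac{dG}{G}\bigr) = \deg G\cdot\tfrac{A-B}{G_{012}^2}\cdot\Omega\in\cC_2^2(\cU, \O^2_{\BP^2})$, so that $\tilde\delta([c(A_X)]) = \bigl[\deg G\cdot\tfrac{A-B}{G_{012}^2}\cdot\Omega\bigr]$ in $H^2_2(\BP^2, \O^2_{\BP^2})$. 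It then remains only to apply $\res_0$, which by our normalization \ref{GR} sends $\tfrac{F(\ud x)}{G_{012}^2}\cdot\Omega$ to $R_F$, provided $F$ lies in $\Bbbk[\ud x]_{6\deg G - 9}$.

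Third comes the degree bookkeeping, which is the one point that needs checking here. From $\deg U_0 = \deg U_2 = 2\deg G - 3$, $\deg U_1 = \deg G - 3$ and $\deg G_i = \deg G - 1$, relation \ref{con} forces $\deg R_i^{(01)} = \deg R_i^{(12)} = 2\deg G - 5$, hence $\deg A_i^{(ij)} = 2\deg G - 4$ by \ref{nt}; substituting into the formulas for $A$ and $B$ in Lemma \ref{res} one verifies that every monomial appearing is homogeneous of degree $6\deg G - 9$, so $A - B\in\Bbbk[\ud x]_{6\deg G - 9}$ and, since $\deg G$ is a scalar, so is $\deg G\cdot(A-B)$. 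This is precisely the degree in which $(\Bbbk[\ud x]/J)_{6\deg G - 9}\simeq\bC$ with $\mathrm{Det}_G(\ud x)$ a generator, so the unique decomposition $F = \sum_i v_i G_i^2 + R_F\cdot\mathrm{Det}_G(\ud x)$ defining $R_F$ exists and formula \ref{GR} applies verbatim with $F = \deg G\cdot(A-B)$. Stringing the three steps together gives $\langle w_0, w_1, w_2\rangle_{A_X} = \res_0\bigl(\deg G\cdot\tfrac{A-B}{G_{012}^2}\cdot\Omega\bigr) = R_{\deg G\cdot(A-B)}$, which is the assertion. The substantive work — the cochain-level formulas for $W_{01}, W_{12}$ (Proposition \ref{wtwo}) and the residue-sequence computation distilling $c(A_X)\wedge\tfrac{dG}{G}$ down to the single polynomial $A-B$ (Lemma \ref{res}) — has already been discharged, so I expect no genuine obstacle in this last step beyond the homogeneity count just sketched, which is exactly what makes the right-hand side $R_{\deg G\cdot(A-B)}$ a well-defined scalar.
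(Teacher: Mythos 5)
Your proposal is correct and follows essentially the same route as the paper: the theorem is obtained there exactly by composing the definition $\langle w_0,w_1,w_2\rangle_{A_X}=\res_0(\tilde\delta([W_{01}\bullet W_2+W_0\bullet W_{12}]))$ with the explicit formula for $\tilde\delta$ from Lemma \ref{res} and the normalization \ref{GR} of $\res_0$. Your added degree check that $A-B\in\Bbbk[\ud x]_{6\deg G-9}$ is a correct verification of a point the paper leaves implicit.
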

See \ref{nt} and \ref{con} for the notations $R_i^{(jk)}$ and $A_i^{(jk)}$.
This theorem implies that the Massey triple product are completely computable using
a computer algebra program.
In particular, we have
\begin{cor}
The Massey triple product $\langle w_0, w_1, w_2 \rangle_{A_X}$ relative to $A_X$ vanishes if and only if $A-B \in J=\langle G_0(\ud x)^2, G_1(\ud x)^2, G_2(\ud x)^2 \rangle$.
\end{cor}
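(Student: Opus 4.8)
The plan is to read the corollary off directly from Theorem~\ref{mt}, so that the only work left is a degree count and a linearity remark. By Theorem~\ref{mt} the Massey triple product equals the scalar $\langle w_0, w_1, w_2\rangle_{A_X} = R_{\deg G\cdot(A-B)}$, where for a homogeneous $F\in\Bbbk[\ud x]_{6\deg G-9}$ the number $R_F\in\bC$ is characterized by $F = \sum_{i=0}^2 v_i(\ud x) G_i(\ud x)^2 + R_F\cdot \mathrm{Det}_G(\ud x)$ with $v_i\in\Bbbk[\ud x]$; equivalently, $R_F$ is the image of the class of $F$ under the Carlson--Griffiths isomorphism $(\Bbbk[\ud x]/J)_{6\deg G-9}\xrightarrow{\ \sim\ }\bC$. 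Hence the corollary reduces to the equivalence $R_{\deg G\cdot(A-B)} = 0 \iff A-B\in J$.

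First I would verify that $A$ and $B$, and therefore $A-B$, are homogeneous of degree exactly $6\deg G-9$, so that the functional $F\mapsto R_F$ is indeed defined on $A-B$. This is bookkeeping from~\ref{con} and~\ref{nt}: the constraints $\deg U_0 = \deg U_2 = 2\deg G-3$ and $\deg U_1 = \deg G-3$ force $\deg R_i^{(01)} = \deg R_i^{(12)} = 2\deg G-5$ and then $\deg A_i^{(jk)} = 2\deg G-4$; substituting into the expressions for $A$ and $B$ in Lemma~\ref{res} gives total degree $6\deg G-9$ in every monomial block. Since $J$ is a homogeneous ideal and $A-B$ is homogeneous of that degree, ``$A-B\in J$'' is equivalent to ``the class of $A-B$ is zero in $(\Bbbk[\ud x]/J)_{6\deg G-9}$''.

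Next, since $\Bbbk=\bC$ has characteristic zero, $\deg G$ is a nonzero scalar, and $F\mapsto R_F$ is $\bC$-linear; hence $R_{\deg G\cdot(A-B)} = \deg G\cdot R_{A-B}$, which vanishes if and only if $R_{A-B}=0$. Finally, because $F\mapsto R_F$ is an \emph{isomorphism} of one-dimensional $\bC$-vector spaces, $R_{A-B}=0$ holds if and only if the class of $A-B$ vanishes in $(\Bbbk[\ud x]/J)_{6\deg G-9}$, i.e. if and only if $A-B\in J$. Chaining the equivalences gives $\langle w_0,w_1,w_2\rangle_{A_X} = 0 \iff A-B\in J$.

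I do not anticipate a genuine obstacle here: the substantive inputs---the closed formula for $\tilde\delta$ in Lemma~\ref{res} and the explicit description of $\res_0$ through $F\mapsto R_F$---are already established, and what remains is the degree check (making sure no term of $A-B$ strays into another graded piece) together with the elementary observation that the nonzero scalar $\deg G$ factors out of $R_{(\cdot)}$. If any care is needed it is only in tracking degrees cleanly through the definitions in~\ref{con} and~\ref{nt}.
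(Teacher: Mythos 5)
Your proposal is correct and matches the paper's intent exactly: the paper states this corollary as an immediate consequence of Theorem \ref{mt} (with no separate proof), since $\langle w_0,w_1,w_2\rangle_{A_X}=R_{\deg G\cdot(A-B)}$ and $R_F=0$ precisely when $F$ lies in $J$ under the isomorphism $(\Bbbk[\ud x]/J)_{6\deg G-9}\simeq\Bbbk$. Your degree bookkeeping and the observation that the nonzero scalar $\deg G$ factors out are exactly the details being suppressed.
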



\begin{cor} \label{indep}
Vanishing of $\langle w_0,w_1,w_2\rangle_{A_X}$ does not depend on choices of cohomology representatives of $[W_0],[W_1],[W_2]$.
\end{cor}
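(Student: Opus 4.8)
The plan is to run the argument through the explicit formula of Theorem \ref{mt}, or rather through the corollary stated just above Corollary \ref{indep}, which says that $\langle w_0,w_1,w_2\rangle_{A_X}$ vanishes precisely when $A-B$ lies in $J=\langle G_0^2,G_1^2,G_2^2\rangle$. So it is enough to prove that the image of $A-B$ in $\Bbbk[\ud x]/J$ (equivalently, the condition $A-B\in J$) is unchanged when $W_0,W_1,W_2$ are replaced by other cocycles representing $w_0,w_1,w_2$. In the description \ref{wone} such a replacement is the same thing as replacing each $U_i$ by $U_i+j_i$ with $j_i\in J_G$ of the appropriate degree. Since $J_G$ is generated in degree $\deg G-1$, we have $J_G\cap\Bbbk[\ud x]_{\deg G-3}=0$, so $U_1$ is already determined by $w_1$; only $U_0$ and $U_2$, of degree $2\deg G-3$, can move, and by carrying out the two moves one after the other it suffices to handle each one separately.

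First I would treat $U_0\mapsto U_0+j_0$ with $j_0\in J_G\cap\Bbbk[\ud x]_{2\deg G-3}$. Write $j_0=\sum_k a_kG_k$ with each $a_k$ of degree $\deg G-2$; since $X$ is smooth, $G_0,G_1,G_2$ is a regular sequence, so its first syzygy module is generated in degree $\deg G-1$ and hence the $a_k$ are uniquely determined. Relation \ref{con} then transports the rest of the defining system by $R^{(01)}_k\mapsto R^{(01)}_k+a_kU_1$ (with $U_1,U_2$ and all of $R^{(12)}$ unchanged), so that $A_0^{(01)}\mapsto A_0^{(01)}+(x_1a_2-x_2a_1)U_1$. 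I would substitute these into the formulas for $A$ and $B$ from Lemma \ref{res}; the resulting change in $A-B$ is a sum of terms each carrying a visible factor $G_i$ coming from $j_0=\sum_k a_kG_k$. Reducing modulo $J$ and rewriting the "mixed" terms of the shape $(\mathrm{poly})\cdot U_1U_2\cdot G_iG_j$ by means of $U_1U_2=\sum_k R^{(12)}_kG_k$, every monomial becomes divisible by some $G_i^2$, i.e.\ lies in $J$, and a direct cancellation shows that what remains vanishes identically; thus $A-B$ is unchanged modulo $J$.

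The move $U_2\mapsto U_2+j_2$ is symmetric, with the roles of the index pairs $(01)$ and $(12)$ interchanged: writing $j_2=\sum_k c_kG_k$, the system is transported by $R^{(12)}_k\mapsto R^{(12)}_k+c_kU_1$ with $R^{(01)}$ left alone, $A_1^{(12)}$ changes accordingly, and the reduction modulo $J$ now uses $U_0U_1=\sum_k R^{(01)}_kG_k$. Composing the two steps shows that $A-B\bmod J$ depends only on $w_0,w_1,w_2$, and then Theorem \ref{mt} together with the corollary above yields that the vanishing of $\langle w_0,w_1,w_2\rangle_{A_X}$ is independent of the chosen representatives, which is the assertion.

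The part I expect to require the most care is the bookkeeping in the second paragraph: among the six monomials making up $A$ and $B$ one must single out exactly those containing $R^{(01)}$, $A_0^{(01)}$ or $U_0$, and then invoke \ref{con} in the right order so that all cross-terms of type $(\mathrm{poly})\cdot G_iG_jG_k$ collapse modulo $J$. I would also flag why no soft argument replaces this computation: the standard indeterminacy of the Massey triple product only shows the lifted class is well defined modulo $w_0\cup H^1(X,\bC)+H^1(X,\bC)\cup w_2$, and by nondegeneracy of the cup-product pairing on a curve this subspace equals all of $H^2(X,\bC)$ unless $w_0=w_2=0$; so the explicit formula of Theorem \ref{mt} and the rigidity of the particular defining system $A_X$ are genuinely needed.
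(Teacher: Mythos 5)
Your proposal is correct and follows essentially the same route as the paper: reduce to the criterion $A-B\in J$, observe that $U_1$ cannot move in degree $\deg G-3$, perturb $U_0$ (resp.\ $U_2$) by $\sum_k T_kG_k\in J_G$ so that $R^{(01)}_k\mapsto R^{(01)}_k+T_kU_1$ (resp.\ $R^{(12)}_k\mapsto R^{(12)}_k+T_kU_1$), and verify by direct substitution into the formulas for $A$ and $B$ that $A-B$ is unchanged modulo $J$. The paper's proof carries out exactly this computation term by term; your added remarks on the uniqueness of the $T_k$ and on why no soft indeterminacy argument suffices are correct but not needed for the argument.
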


\begin{proof}
Since a cohomology representative $W_i$ of $w_i$ corresponds to $U_i$ via the Griffiths isomorphism \ref{Gisom}, the statement that $W_i$ is cohomologous to $\widetilde{W}_i$ is equivalent to that $U_i-\widetilde{U}_i$ belongs to the Jacobian ideal $J_G$ (we use the notation $\ \widetilde{\cdot} \ $ for quantities corresponding to $\widetilde{W}_i$). 
It suffices to prove the corollary for each $U_i, \ i=0,1,2,$ one by one.
We start from $U_0$. Let
$\widetilde{U}_0:=U_0+T_0G_0+T_1G_1+T_2G_2$ for $T_i \in \bC[\ud x]$.
In this case,
\begin{align*}
&\widetilde{R}^{(01)}_i=R^{(01)}_i+T_iU_1,\quad\widetilde{R}^{(12)}_i=R^{(12)}_i, \quad i =0,1,2,\\
&\widetilde{A}^{(01)}_k=A^{(01)}_k+T_{k+2}x_{k+1}U_1-T_{k+1}x_{k+2}U_1,\quad\widetilde{A}^{(12)}_k=A^{(12)}_k, \quad k \equiv 0,1,2 \mod 3.
\end{align*}
\begin{align*}
\widetilde{A}&\equiv\widetilde{U}_0R^{(12)}_0G_0G_1+U_2x_0\widetilde{R}^{(01)}_2G_0G_2\mod J\\
&\equiv(U_0+T_2G_2)x_1R^{(12)}_0G_0G_1+U_2x_0\left(R^{(01)}_2+T_2U_1\right)G_0G_2\mod J\\
&\equiv A+T_2x_1R^{(12)}_0G_0G_1G_2+T_2x_0R^{(12)}_1G_0G_1G_2\mod J.
\end{align*}
\begin{align*}
\widetilde{B}&\equiv\widetilde{A}^{(01)}_0U_2G_1G_2+\widetilde{U}_0A^{(12)}_1G_0G_2+\widetilde{U}_0U_1U_2x_0G_0\mod J\\
&\equiv\left(A^{(01)}_0+T_2x_1U_1-T_1x_2U_1\right)U_2G_1G_2+(U_0+T_1G_1)A^{(12)}_1G_0G_2\\
&\quad\quad+(U_0+T_1G_1+T_2G_2)U_1U_2x_0G_0\mod J\\
&\equiv B+(T_2x_1-T_1x_2)R^{(12)}_0G_0G_1G_2+T_1A^{(12)}_1G_0G_1G_2\\
&\quad\quad+T_1x_0R^{(12)}_2G_0G_1G_2+T_2x_0R^{(12)}_1G_0G_1G_2\mod J\\
&\equiv B+T_2x_1R^{(12)}_0G_0G_1G_2+T_2x_0R^{(12)}_1G_0G_1G_2\mod J
\end{align*}
Therefore, $\widetilde{A}-\widetilde{B}\equiv A-B\mod J$.
If we let $\widetilde{U}_2:=U_2+T_0G_0+T_1G_1+T_2G_2$ for $T_i\in \bC[\ud x]$,
then a similar computation again confirms that $\widetilde{A}-\widetilde{B}\equiv A-B\mod J$.
Finally, let $\widetilde{U}_1:=U_1+T_0G_0+T_1G_1+T_2G_2$.
Since $\deg\widetilde{U}_1=\deg U_1=\deg G-3$, the only possibility is
\begin{align*}
T_0G_0+T_1G_1+T_2G_2=0
\end{align*}
Therefore, $\widetilde{R}^{(ij)}_k=R^{(ij)}_k$ and $\widetilde{A}^{(ij)}_k=A^{(ij)}_k$. Hence $\widetilde{A}-\widetilde{B}\equiv A-B \mod J$ in this case, too.
\end{proof}

\begin{cor}
	If $\deg G=3$, then $A-B\in J$, i.e., $\langle w_0,w_1,w_2\rangle_{A_X}=0$ for all $w_0,w_1,w_2$.
\end{cor}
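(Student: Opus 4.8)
The plan is to combine the explicit formula of Theorem~\ref{mt} with the rigid degree constraints imposed by $\deg G=3$. In that case $c_X=\deg G-3=0$, so $U_1\in\Bbbk[\ud x]_0$ is a scalar while $U_0,U_2\in\Bbbk[\ud x]_{2\deg G-3}=\Bbbk[\ud x]_3$; hence in \ref{con} the polynomials $R_i^{(01)}$ and $R_i^{(12)}$ are homogeneous of degree $1$. The first step is to observe that these linear forms are \emph{uniquely} determined by \ref{con}: two solutions of $\sum_i R_iG_i=U_jU_k$ differ by a syzygy of $(G_0,G_1,G_2)$, and since $X_G$ is smooth the $G_i$ form a regular sequence, so every homogeneous syzygy is a $\Bbbk[\ud x]$-combination of the Koszul ones, whose entries have degree $\deg G-1=2$; in particular there is no nonzero syzygy with linear entries.

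I would then split into two cases. If $U_1=0$, uniqueness forces $R_i^{(01)}=R_i^{(12)}=0$ for every $i$, so all the $A_i^{(jk)}$ of \ref{nt} vanish, the Cech cochains $W_{01},W_{12}$ of Proposition~\ref{wtwo} are identically zero, and the quantities $A,B$ of Theorem~\ref{mt} vanish; thus trivially $A-B=0\in J$. If $U_1=c\neq0$, then $U_0U_1=cU_0$ lies in the Jacobian ideal $J_G$ (Theorem~\ref{tcg}), which forces $U_0\in J_G$; writing $U_0=\sum_j S_jG_j$ and $U_2=\sum_j T_jG_j$ with $S_j,T_j\in\Bbbk[\ud x]_1$, uniqueness now yields $R_i^{(01)}=cS_i$ and $R_i^{(12)}=cT_i$, hence $A_i^{(01)}=c(x_{i+1}S_{i+2}-x_{i+2}S_{i+1})$ and $A_i^{(12)}=c(x_{i+1}T_{i+2}-x_{i+2}T_{i+1})$ (indices mod $3$).

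The final step is a direct computation modulo $J=\langle G_0^2,G_1^2,G_2^2\rangle$. Every monomial occurring in $A$ and in $B$ (see Theorem~\ref{mt}) carries $U_0$ or $U_2$ as a factor; after substituting $U_0=\sum_j S_jG_j$ and $U_2=\sum_j T_jG_j$, each such monomial acquires a triple product $G_aG_bG_c$ of first partials, and modulo $J$ only the summands with $a,b,c$ pairwise distinct survive --- that is, a $\Bbbk[\ud x]$-multiple of $G_0G_1G_2$. Carrying out this reduction term by term gives
$$
A\equiv c\,S_2\,(x_1T_0+x_0T_1)\,G_0G_1G_2\equiv B\pmod{J},
$$
so $A-B\in J$; then $\deg G\cdot(A-B)\in J$, whence $R_{\deg G\cdot(A-B)}=0$, and Theorem~\ref{mt} gives $\langle w_0,w_1,w_2\rangle_{A_X}=0$. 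I expect the only real obstacle to be the bookkeeping in this last step: tracking precisely which triple products $G_aG_bG_c$ are annihilated by $J$ and verifying that the surviving coefficient of $G_0G_1G_2$ in $A$ coincides with the one in $B$, which comes down to an elementary identity among the linear forms $S_j,T_j$ and the coordinates $x_i$. Conceptually the vanishing is not surprising: for $\deg G=3$ the curve $X$ is elliptic with $\dim_\bC H^0(X,\O_X^1)=\dim_\bC H^1(X,\O_X^0)=1$, so the isotropy conditions $w_0\cup w_1=w_1\cup w_2=0$ already force $w_1=0$ or $w_0=w_2=0$, and our explicit defining system $A_X$ then degenerates accordingly.
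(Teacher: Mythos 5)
Your proposal is correct and follows essentially the same route as the paper: since $\deg G=3$ forces $U_1$ to be a scalar, one writes $U_0=\sum_i R_i^{(01)}G_i$ and $U_2=\sum_i R_i^{(12)}G_i$, substitutes into the formulas for $A$ and $B$ from Theorem~\ref{mt}, and observes that modulo $J=\langle G_0^2,G_1^2,G_2^2\rangle$ only the $G_0G_1G_2$-terms survive and cancel between $A$ and $B$ (your displayed congruence $A\equiv cS_2(x_1T_0+x_0T_1)G_0G_1G_2\equiv B$ does check out term by term). The extra observations you add --- uniqueness of the $R_i$ via the Koszul syzygies and the conceptual remark about the elliptic-curve case --- are correct but not needed for the paper's argument, which simply normalizes $U_1=1$ and computes with an arbitrary choice of the $R_i$.
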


\begin{proof}
	Since $\deg G=3$, we can assume that $U_1=1$, $U_0=\sum_{i=0}^2R_i^{(01)}G_i$, and $U_2=\sum_{i=0}^2R_i^{(12)}G_i$. By Theorem \ref{mt},
	\begin{eqnarray*}
		A&=&U_0x_1R_0^{(12)}G_0G_1+U_2x_0R_2^{(01)}G_0G_2+U_0x_0R_0^{(12)}G_0G_0,\\
		&=&\Big(\sum_{i=0}^2R_i^{(01)}G_i\Big)x_1R_0^{(12)}G_0G_1+\Big(\sum_{i=0}^2R_i^{(12)}G_i\Big)x_0R_2^{(01)}G_0G_2+\Big(\sum_{i=0}^2R_i^{(01)}G_i\Big)x_0R_0^{(12)}G_0G_0,\\
		&\equiv&x_1R_2^{(01)}R_0^{(12)}G_0G_1G_2+x_0R_2^{(01)}R_1^{(12)}G_0G_1G_2\mod J,\\
		B&=& A_0^{(01)}U_2G_1G_2+U_0A_1^{(12)}G_0G_2+U_0U_1U_2x_0G_0,\\
		&=& (x_1R_2^{(01)}-x_2R_1^{(01)})\Big(\sum_{i=1}^2R_i^{(12)}G_i\Big)G_1G_2+\Big(\sum_{i=1}^2R_i^{(01)}G_i\Big)(-x_0R_2^{(12)}+x_2R_0^{(12)})G_0G_2\\
		&&+\Big(\sum_{i=1}^2R_i^{(01)}G_i\Big)\Big(\sum_{i=1}^2R_i^{(12)}G_i\Big)x_0G_0,\\
		&\equiv& (x_1R_2^{(01)}-x_2R_1^{(01)})R_0^{(12)}G_0G_1G_2+R_1^{(01)}G_1(-x_0R_2^{(12)}+x_2R_0^{(12)})G_0G_2\\
		&&+R_1^{(01)}G_1R_2^{(12)}G_2x_0G_0+R_2^{(01)}G_2R_1^{(12)}G_1x_0G_0\mod J,\\
		&\equiv&x_1R_2^{(01)}R_0^{(12)}G_0G_1G_2+x_0R_2^{(01)}R_1^{(12)}G_0G_1G_2\mod J.
	\end{eqnarray*}
	Therefore, $A-B\equiv0\mod J$, i.e., $A-B\in J$.
\end{proof}

\subsection{Other possible Massey triple products}
We explain why we chose $w_0,w_2$ anti-holomorphic and $w_1$ holomorphic in order to get a non-trivial defining system. 
The first line in Table \ref{tp} is the case we considered in Theorem \ref{mt}.
The second and third lines in Table \ref{tp} can be considered in the same way as Theorem \ref{mt}. For other remaining choices, we do not know how to get some non-zero $W_{ij}$ such that $D(W_{ij})=W_i \cup W_j$ and the Massey triple product is zero for trivial reasons (the cup product of two holomorphic forms (respectively, two anti-holomorphic forms) is zero). 
Note that we do not know an explicit expression\footnote{We only know $W_{ij} \in \cC_0^1$ such that $D(W_{ij})=W_i \bullet W_j$ in Proposition \ref{wtwo}.} $\tilde W_{ij} \in \cC_1^0$ such that $D(\tilde W_{ij})=W_i \bullet W_j \in \cC_1^1$ when $w_i \cup w_j =0$. The asterisk * in Table \ref{tp} means that it is not zero in general.

\begin{table}
\caption{Defining systems for the Massey triple product: $A=\{W_0,W_1,W_2,W_{01},W_{12} \}$}\label{tp}
\begin{tabular}{|c | c | c | c | c |c | } 
\hline
 $W_0$ & $W_1$   & $W_2$      &  $W_{01}$     &  $W_{12}$      & $\langle W_0,W_1,W_2\rangle_A$     \\ \hline 
  anti-holomorphic     & holomorphic      & anti-holomorphic &  *  &    *   &    *  \\ \hline 
  anti-holomorphic
 & anti-holomorphic     & holomorphic      & 0      & *      & *     \\ \hline 
   holomorphic
 & anti-holomorphic& anti-holomorphic & *  & 0 &  * \\
 \hline
 holomorphic
 & anti-holomorphic      & holomorphic   &  *    &  *   & 0    \\ \hline 
anti-holomorphic
 & anti-holomorphic    & anti-holomorphic     & 0      & 0      & 0     \\ \hline 
 holomorphic
 & holomorphic   & holomorphic     & 0      & 0      & 0     \\ \hline 
 holomorphic
 & holomorphic     &   anti-holomorphic    & 0     & * & 0     \\ \hline 
 anti-holomorphic
 & holomorphic      & holomorphic      & *      & 0     & 0     \\ \hline 
\end{tabular}
\end{table}

\section{Computer algorithm and experiment}
The goal of this section is to provide an explicit computer algorithm and to calculate triple Massey products of some examples. 
Our example is a family of smooth projective curve $X_G$ given by $G(\underline{x}) = x_0^n+x_1^n+x_2^n$ as $n$ varies. Let us use the following simplified notations:
\[
  \langle U_0,U_1,U_2\rangle=\langle w_0,w_1,w_2\rangle_{A_X},
\]
where the relations between $U_i$ and $w_i$ are give in (\ref{wone}). We use the program \textsf{SageMath8.3} for running the algorithms.\par
First, we have to find $U_0,U_1,U_2$ for computing the triple Massey product such that $U_0U_1,U_1U_2\in J_G$. Our method for choosing $U_0,U_1$ such that $U_0U_1\in J_G$ is to generate repeatedly homogeneous polynomials $U_0,U_1$ randomly until the multiplication $U_0U_1$ is in $J_G$.
During this repeated process, we analyze the relation between the number of monomial terms of selected polynomials $U_0, U_1$ and vanishing behavior of the cup product. For this, we choose 50,000 pairs of homogeneous polynomials $U_0$ and $U_1$ such that $\mathrm{deg}U_0=2n-3$ and $\mathrm{deg}U_2=n-3$ and then calculate the ratio of pairs $U_0,U_1$ among 50,000 pairs such that $U_0U_1\in J_G$. 
Define 
$$
M_1(n) = \frac{(n-1)(n-2)}{2}, \quad M_2(n)=\frac{(2n-1)(2n-2)}{2}
$$
 where $M_1(n)$ (respectively, $M_2(n)$) is the number of monomials of degree $n-3$ (respectively, $2n-3$).
Then we calculate the ratio of vanishing cup product 
when the number of monomial terms of randomly chosen $U_0$ (respectively, $U_1$) is less than or equal to $\ulcorner\frac{M_2(n)}{\ell}\urcorner$ (respectively, $\ulcorner\frac{M_1(n)}{\ell}\urcorner$) as we vary $\ell =1,2, \cdots$.
The results are recorded in the graphs of Figure \ref{fig:grf}.
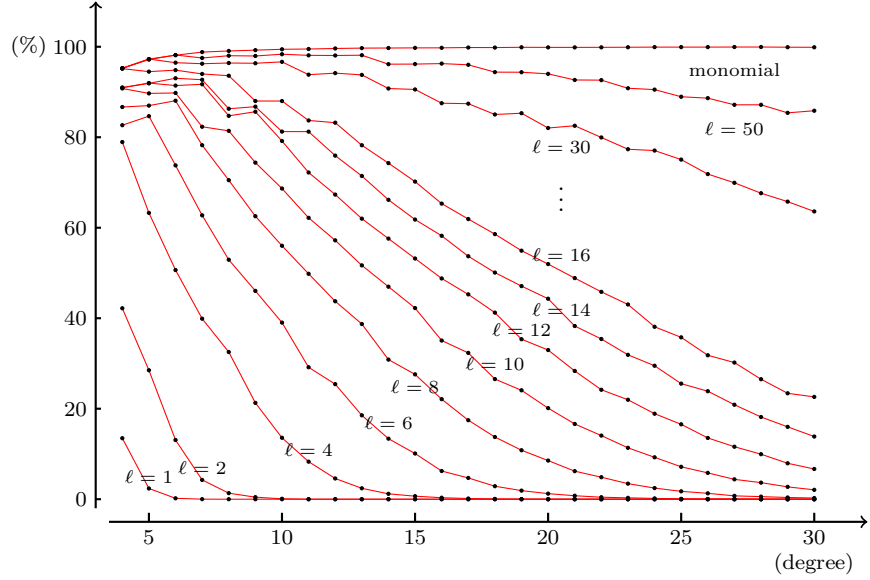
\begin{figure}[] 
  \begin{center}
    \begin{tikzpicture}[yscale=0.06,xscale=0.35]
      \draw[thick,->] (3.5,-5)--(32,-5);
      \draw[thick,->] (3,-2)--(3,110);
      \draw[thick,-] (3,0)--(3.2,0);
      \draw[thick,-] (3,20)--(3.2,20);
      \draw[thick,-] (3,40)--(3.2,40);
      \draw[thick,-] (3,60)--(3.2,60);
      \draw[thick,-] (3,80)--(3.2,80);
      \draw[thick,-] (3,100)--(3.2,100);
      \draw[thick,-] (5,-5)--(5,-3);
      \draw[thick,-] (10,-5)--(10,-3);
      \draw[thick,-] (15,-5)--(15,-3);
      \draw[thick,-] (20,-5)--(20,-3);
      \draw[thick,-] (25,-5)--(25,-3);
      \draw[thick,-] (30,-5)--(30,-3);
      \node[below] at (5,-5) {\footnotesize5};
      \node[below] at (10,-5) {\footnotesize10};
      \node[below] at (15,-5) {\footnotesize15};
      \node[below] at (20,-5) {\footnotesize20};
      \node[below] at (25,-5) {\footnotesize25};
      \node[below] at (30,-5) {\footnotesize30};
      \node[below] at (30,-10) {\footnotesize(degree)};
      \node[left] at (3,0) {\footnotesize0};
      \node[left] at (3,20) {\footnotesize20};
      \node[left] at (3,40) {\footnotesize40};
      \node[left] at (3,60) {\footnotesize60};
      \node[left] at (3,80) {\footnotesize80};
      \node[left] at (3,100) {\footnotesize100};
      \node[left] at (1.5,100) {\footnotesize(\%)};
      \draw[red,-](4,13.48)--(5,2.37)--(6,0.20)--(7,0.01)--(8,0.00)--(9,0.00)--(10,0.00)--(11,0.00)--(12,0.00)--(13,0.00)--(14,0.00)--(15,0.00)--(16,0.00)--(17,0.00)--(18,0.00)--(19,0.00)--(20,0.00)--(21,0.00)--(22,0.00)--(23,0.00)--(24,0.00)--(25,0.00)--(26,0.00)--(27,0.00)--(28,0.00)--(29,0.00)--(30,0.00);
      \draw[red,-](4,42.20)--(5,28.50)--(6,13.08)--(7,4.26)--(8,1.34)--(9,0.45)--(10,0.09)--(11,0.00)--(12,0.00)--(13,0.00)--(14,0.00)--(15,0.00)--(16,0.00)--(17,0.00)--(18,0.00)--(19,0.00)--(20,0.00)--(21,0.00)--(22,0.00)--(23,0.00)--(24,0.00)--(25,0.00)--(26,0.00)--(27,0.00)--(28,0.00)--(29,0.00)--(30,0.00);
      \draw[red,-](4,78.94)--(5,63.28)--(6,50.65)--(7,39.90)--(8,32.53)--(9,21.29)--(10,13.58)--(11,8.30)--(12,4.58)--(13,2.41)--(14,1.19)--(15,0.67)--(16,0.33)--(17,0.15)--(18,0.06)--(19,0.02)--(20,0.00)--(21,0.00)--(22,0.00)--(23,0.00)--(24,0.00)--(25,0.00)--(26,0.00)--(27,0.00)--(28,0.00)--(29,0.00)--(30,0.00);
      \draw[red,-](4,82.66)--(5,84.66)--(6,73.78)--(7,62.74)--(8,52.93)--(9,46.04)--(10,39.04)--(11,29.16)--(12,25.43)--(13,18.54)--(14,13.38)--(15,10.10)--(16,6.23)--(17,4.70)--(18,2.87)--(19,1.90)--(20,1.23)--(21,0.77)--(22,0.44)--(23,0.30)--(24,0.18)--(25,0.08)--(26,0.05)--(27,0.04)--(28,0.01)--(29,0.00)--(30,0.00);
      \draw[red,-](4,86.68)--(5,86.99)--(6,88.07)--(7,78.24)--(8,70.51)--(9,62.55)--(10,56.02)--(11,49.82)--(12,43.75)--(13,38.72)--(14,30.86)--(15,27.59)--(16,22.11)--(17,17.46)--(18,13.75)--(19,10.82)--(20,8.54)--(21,6.20)--(22,4.86)--(23,3.44)--(24,2.46)--(25,1.74)--(26,1.30)--(27,0.74)--(28,0.57)--(29,0.38)--(30,0.26);
      \draw[red,-](4,90.78)--(5,89.69)--(6,89.79)--(7,82.32)--(8,81.41)--(9,74.39)--(10,68.67)--(11,62.19)--(12,57.21)--(13,51.68)--(14,46.97)--(15,42.25)--(16,35.07)--(17,32.34)--(18,26.55)--(19,24.07)--(20,20.13)--(21,16.63)--(22,14.06)--(23,11.36)--(24,9.26)--(25,7.16)--(26,5.82)--(27,4.39)--(28,3.66)--(29,2.73)--(30,2.07);
      \draw[red,-](4,90.97)--(5,91.98)--(6,91.42)--(7,91.71)--(8,84.73)--(9,85.64)--(10,79.17)--(11,72.21)--(12,67.33)--(13,62.01)--(14,57.60)--(15,53.20)--(16,48.79)--(17,45.27)--(18,41.24)--(19,35.36)--(20,32.97)--(21,28.35)--(22,24.19)--(23,21.98)--(24,18.90)--(25,16.55)--(26,13.54)--(27,11.60)--(28,9.95)--(29,7.94)--(30,6.68);
      \draw[red,-](4,90.96)--(5,91.91)--(6,93.04)--(7,92.71)--(8,86.30)--(9,86.76)--(10,81.25)--(11,81.24)--(12,75.94)--(13,71.44)--(14,66.17)--(15,61.81)--(16,58.22)--(17,53.70)--(18,50.09)--(19,47.10)--(20,44.31)--(21,38.28)--(22,35.41)--(23,31.91)--(24,29.50)--(25,25.54)--(26,23.87)--(27,20.88)--(28,18.17)--(29,15.99)--(30,13.85);
      \draw[red,-](4,95.16)--(5,94.50)--(6,94.83)--(7,93.99)--(8,93.60)--(9,88.01)--(10,88.00)--(11,83.71)--(12,83.22)--(13,78.23)--(14,74.28)--(15,70.20)--(16,65.33)--(17,61.92)--(18,58.59)--(19,54.93)--(20,51.97)--(21,48.87)--(22,45.83)--(23,43.04)--(24,38.11)--(25,35.77)--(26,31.81)--(27,30.20)--(28,26.52)--(29,23.41)--(30,22.61);
      \draw[red,-](4,95.19)--(5,97.28)--(6,96.48)--(7,96.25)--(8,96.41)--(9,96.35)--(10,96.66)--(11,93.83)--(12,94.17)--(13,93.78)--(14,90.77)--(15,90.56)--(16,87.52)--(17,87.42)--(18,85.03)--(19,85.31)--(20,82.04)--(21,82.54)--(22,79.95)--(23,77.35)--(24,77.06)--(25,75.06)--(26,71.86)--(27,69.93)--(28,67.63)--(29,65.77)--(30,63.60);
      \draw[red,-](4,95.28)--(5,97.17)--(6,98.17)--(7,97.52)--(8,98.02)--(9,97.96)--(10,98.35)--(11,98.10)--(12,98.07)--(13,98.13)--(14,96.15)--(15,96.17)--(16,96.28)--(17,96.01)--(18,94.39)--(19,94.37)--(20,94.01)--(21,92.66)--(22,92.62)--(23,90.83)--(24,90.53)--(25,88.95)--(26,88.64)--(27,87.17)--(28,87.17)--(29,85.39)--(30,85.86);
      \draw[red,-](4,95.26)--(5,97.28)--(6,98.13)--(7,98.82)--(8,99.07)--(9,99.25)--(10,99.44)--(11,99.50)--(12,99.62)--(13,99.70)--(14,99.72)--(15,99.75)--(16,99.76)--(17,99.83)--(18,99.82)--(19,99.88)--(20,99.86)--(21,99.86)--(22,99.87)--(23,99.89)--(24,99.92)--(25,99.92)--(26,99.92)--(27,99.94)--(28,99.94)--(29,99.90)--(30,99.88);

      \draw[fill=black] (4,13.48) ellipse (0.06cm and 0.35cm);
      \draw[fill=black] (5,2.37) ellipse (0.06cm and 0.35cm);
      \draw[fill=black] (6,0.20) ellipse (0.06cm and 0.35cm);
      \draw[fill=black] (7,0.01) ellipse (0.06cm and 0.35cm);
      \draw[fill=black] (8,0.00) ellipse (0.06cm and 0.35cm);
      \draw[fill=black] (9,0.00) ellipse (0.06cm and 0.35cm);
      \draw[fill=black] (10,0.00) ellipse (0.06cm and 0.35cm);
      \draw[fill=black] (11,0.00) ellipse (0.06cm and 0.35cm);
      \draw[fill=black] (12,0.00) ellipse (0.06cm and 0.35cm);
      \draw[fill=black] (13,0.00) ellipse (0.06cm and 0.35cm);
      \draw[fill=black] (14,0.00) ellipse (0.06cm and 0.35cm);
      \draw[fill=black] (15,0.00) ellipse (0.06cm and 0.35cm);
      \draw[fill=black] (16,0.00) ellipse (0.06cm and 0.35cm);
      \draw[fill=black] (17,0.00) ellipse (0.06cm and 0.35cm);
      \draw[fill=black] (18,0.00) ellipse (0.06cm and 0.35cm);
      \draw[fill=black] (19,0.00) ellipse (0.06cm and 0.35cm);
      \draw[fill=black] (20,0.00) ellipse (0.06cm and 0.35cm);
      \draw[fill=black] (21,0.00) ellipse (0.06cm and 0.35cm);
      \draw[fill=black] (22,0.00) ellipse (0.06cm and 0.35cm);
      \draw[fill=black] (23,0.00) ellipse (0.06cm and 0.35cm);
      \draw[fill=black] (24,0.00) ellipse (0.06cm and 0.35cm);
      \draw[fill=black] (25,0.00) ellipse (0.06cm and 0.35cm);
      \draw[fill=black] (26,0.00) ellipse (0.06cm and 0.35cm);
      \draw[fill=black] (27,0.00) ellipse (0.06cm and 0.35cm);
      \draw[fill=black] (28,0.00) ellipse (0.06cm and 0.35cm);
      \draw[fill=black] (29,0.00) ellipse (0.06cm and 0.35cm);
      \draw[fill=black] (30,0.00) ellipse (0.06cm and 0.35cm);
      \draw[fill=black] (4,42.20) ellipse (0.06cm and 0.35cm);
      \draw[fill=black] (5,28.50) ellipse (0.06cm and 0.35cm);
      \draw[fill=black] (6,13.08) ellipse (0.06cm and 0.35cm);
      \draw[fill=black] (7,4.26) ellipse (0.06cm and 0.35cm);
      \draw[fill=black] (8,1.34) ellipse (0.06cm and 0.35cm);
      \draw[fill=black] (9,0.45) ellipse (0.06cm and 0.35cm);
      \draw[fill=black] (10,0.09) ellipse (0.06cm and 0.35cm);
      \draw[fill=black] (11,0.00) ellipse (0.06cm and 0.35cm);
      \draw[fill=black] (12,0.00) ellipse (0.06cm and 0.35cm);
      \draw[fill=black] (13,0.00) ellipse (0.06cm and 0.35cm);
      \draw[fill=black] (14,0.00) ellipse (0.06cm and 0.35cm);
      \draw[fill=black] (15,0.00) ellipse (0.06cm and 0.35cm);
      \draw[fill=black] (16,0.00) ellipse (0.06cm and 0.35cm);
      \draw[fill=black] (17,0.00) ellipse (0.06cm and 0.35cm);
      \draw[fill=black] (18,0.00) ellipse (0.06cm and 0.35cm);
      \draw[fill=black] (19,0.00) ellipse (0.06cm and 0.35cm);
      \draw[fill=black] (20,0.00) ellipse (0.06cm and 0.35cm);
      \draw[fill=black] (21,0.00) ellipse (0.06cm and 0.35cm);
      \draw[fill=black] (22,0.00) ellipse (0.06cm and 0.35cm);
      \draw[fill=black] (23,0.00) ellipse (0.06cm and 0.35cm);
      \draw[fill=black] (24,0.00) ellipse (0.06cm and 0.35cm);
      \draw[fill=black] (25,0.00) ellipse (0.06cm and 0.35cm);
      \draw[fill=black] (26,0.00) ellipse (0.06cm and 0.35cm);
      \draw[fill=black] (27,0.00) ellipse (0.06cm and 0.35cm);
      \draw[fill=black] (28,0.00) ellipse (0.06cm and 0.35cm);
      \draw[fill=black] (29,0.00) ellipse (0.06cm and 0.35cm);
      \draw[fill=black] (30,0.00) ellipse (0.06cm and 0.35cm);
      \draw[fill=black] (4,78.94) ellipse (0.06cm and 0.35cm);
      \draw[fill=black] (5,63.28) ellipse (0.06cm and 0.35cm);
      \draw[fill=black] (6,50.65) ellipse (0.06cm and 0.35cm);
      \draw[fill=black] (7,39.90) ellipse (0.06cm and 0.35cm);
      \draw[fill=black] (8,32.53) ellipse (0.06cm and 0.35cm);
      \draw[fill=black] (9,21.29) ellipse (0.06cm and 0.35cm);
      \draw[fill=black] (10,13.58) ellipse (0.06cm and 0.35cm);
      \draw[fill=black] (11,8.30) ellipse (0.06cm and 0.35cm);
      \draw[fill=black] (12,4.58) ellipse (0.06cm and 0.35cm);
      \draw[fill=black] (13,2.41) ellipse (0.06cm and 0.35cm);
      \draw[fill=black] (14,1.19) ellipse (0.06cm and 0.35cm);
      \draw[fill=black] (15,0.67) ellipse (0.06cm and 0.35cm);
      \draw[fill=black] (16,0.33) ellipse (0.06cm and 0.35cm);
      \draw[fill=black] (17,0.15) ellipse (0.06cm and 0.35cm);
      \draw[fill=black] (18,0.06) ellipse (0.06cm and 0.35cm);
      \draw[fill=black] (19,0.02) ellipse (0.06cm and 0.35cm);
      \draw[fill=black] (20,0.00) ellipse (0.06cm and 0.35cm);
      \draw[fill=black] (21,0.00) ellipse (0.06cm and 0.35cm);
      \draw[fill=black] (22,0.00) ellipse (0.06cm and 0.35cm);
      \draw[fill=black] (23,0.00) ellipse (0.06cm and 0.35cm);
      \draw[fill=black] (24,0.00) ellipse (0.06cm and 0.35cm);
      \draw[fill=black] (25,0.00) ellipse (0.06cm and 0.35cm);
      \draw[fill=black] (26,0.00) ellipse (0.06cm and 0.35cm);
      \draw[fill=black] (27,0.00) ellipse (0.06cm and 0.35cm);
      \draw[fill=black] (28,0.00) ellipse (0.06cm and 0.35cm);
      \draw[fill=black] (29,0.00) ellipse (0.06cm and 0.35cm);
      \draw[fill=black] (30,0.00) ellipse (0.06cm and 0.35cm);
      \draw[fill=black] (4,82.66) ellipse (0.06cm and 0.35cm);
      \draw[fill=black] (5,84.66) ellipse (0.06cm and 0.35cm);
      \draw[fill=black] (6,73.78) ellipse (0.06cm and 0.35cm);
      \draw[fill=black] (7,62.74) ellipse (0.06cm and 0.35cm);
      \draw[fill=black] (8,52.93) ellipse (0.06cm and 0.35cm);
      \draw[fill=black] (9,46.04) ellipse (0.06cm and 0.35cm);
      \draw[fill=black] (10,39.04) ellipse (0.06cm and 0.35cm);
      \draw[fill=black] (11,29.16) ellipse (0.06cm and 0.35cm);
      \draw[fill=black] (12,25.43) ellipse (0.06cm and 0.35cm);
      \draw[fill=black] (13,18.54) ellipse (0.06cm and 0.35cm);
      \draw[fill=black] (14,13.38) ellipse (0.06cm and 0.35cm);
      \draw[fill=black] (15,10.10) ellipse (0.06cm and 0.35cm);
      \draw[fill=black] (16,6.23) ellipse (0.06cm and 0.35cm);
      \draw[fill=black] (17,4.70) ellipse (0.06cm and 0.35cm);
      \draw[fill=black] (18,2.87) ellipse (0.06cm and 0.35cm);
      \draw[fill=black] (19,1.90) ellipse (0.06cm and 0.35cm);
      \draw[fill=black] (20,1.23) ellipse (0.06cm and 0.35cm);
      \draw[fill=black] (21,0.77) ellipse (0.06cm and 0.35cm);
      \draw[fill=black] (22,0.44) ellipse (0.06cm and 0.35cm);
      \draw[fill=black] (23,0.30) ellipse (0.06cm and 0.35cm);
      \draw[fill=black] (24,0.18) ellipse (0.06cm and 0.35cm);
      \draw[fill=black] (25,0.08) ellipse (0.06cm and 0.35cm);
      \draw[fill=black] (26,0.05) ellipse (0.06cm and 0.35cm);
      \draw[fill=black] (27,0.04) ellipse (0.06cm and 0.35cm);
      \draw[fill=black] (28,0.01) ellipse (0.06cm and 0.35cm);
      \draw[fill=black] (29,0.00) ellipse (0.06cm and 0.35cm);
      \draw[fill=black] (30,0.00) ellipse (0.06cm and 0.35cm);
      \draw[fill=black] (4,86.68) ellipse (0.06cm and 0.35cm);
      \draw[fill=black] (5,86.99) ellipse (0.06cm and 0.35cm);
      \draw[fill=black] (6,88.07) ellipse (0.06cm and 0.35cm);
      \draw[fill=black] (7,78.24) ellipse (0.06cm and 0.35cm);
      \draw[fill=black] (8,70.51) ellipse (0.06cm and 0.35cm);
      \draw[fill=black] (9,62.55) ellipse (0.06cm and 0.35cm);
      \draw[fill=black] (10,56.02) ellipse (0.06cm and 0.35cm);
      \draw[fill=black] (11,49.82) ellipse (0.06cm and 0.35cm);
      \draw[fill=black] (12,43.75) ellipse (0.06cm and 0.35cm);
      \draw[fill=black] (13,38.72) ellipse (0.06cm and 0.35cm);
      \draw[fill=black] (14,30.86) ellipse (0.06cm and 0.35cm);
      \draw[fill=black] (15,27.59) ellipse (0.06cm and 0.35cm);
      \draw[fill=black] (16,22.11) ellipse (0.06cm and 0.35cm);
      \draw[fill=black] (17,17.46) ellipse (0.06cm and 0.35cm);
      \draw[fill=black] (18,13.75) ellipse (0.06cm and 0.35cm);
      \draw[fill=black] (19,10.82) ellipse (0.06cm and 0.35cm);
      \draw[fill=black] (20,8.54) ellipse (0.06cm and 0.35cm);
      \draw[fill=black] (21,6.20) ellipse (0.06cm and 0.35cm);
      \draw[fill=black] (22,4.86) ellipse (0.06cm and 0.35cm);
      \draw[fill=black] (23,3.44) ellipse (0.06cm and 0.35cm);
      \draw[fill=black] (24,2.46) ellipse (0.06cm and 0.35cm);
      \draw[fill=black] (25,1.74) ellipse (0.06cm and 0.35cm);
      \draw[fill=black] (26,1.30) ellipse (0.06cm and 0.35cm);
      \draw[fill=black] (27,0.74) ellipse (0.06cm and 0.35cm);
      \draw[fill=black] (28,0.57) ellipse (0.06cm and 0.35cm);
      \draw[fill=black] (29,0.38) ellipse (0.06cm and 0.35cm);
      \draw[fill=black] (30,0.26) ellipse (0.06cm and 0.35cm);
      \draw[fill=black] (4,90.78) ellipse (0.06cm and 0.35cm);
      \draw[fill=black] (5,89.69) ellipse (0.06cm and 0.35cm);
      \draw[fill=black] (6,89.79) ellipse (0.06cm and 0.35cm);
      \draw[fill=black] (7,82.32) ellipse (0.06cm and 0.35cm);
      \draw[fill=black] (8,81.41) ellipse (0.06cm and 0.35cm);
      \draw[fill=black] (9,74.39) ellipse (0.06cm and 0.35cm);
      \draw[fill=black] (10,68.67) ellipse (0.06cm and 0.35cm);
      \draw[fill=black] (11,62.19) ellipse (0.06cm and 0.35cm);
      \draw[fill=black] (12,57.21) ellipse (0.06cm and 0.35cm);
      \draw[fill=black] (13,51.68) ellipse (0.06cm and 0.35cm);
      \draw[fill=black] (14,46.97) ellipse (0.06cm and 0.35cm);
      \draw[fill=black] (15,42.25) ellipse (0.06cm and 0.35cm);
      \draw[fill=black] (16,35.07) ellipse (0.06cm and 0.35cm);
      \draw[fill=black] (17,32.34) ellipse (0.06cm and 0.35cm);
      \draw[fill=black] (18,26.55) ellipse (0.06cm and 0.35cm);
      \draw[fill=black] (19,24.07) ellipse (0.06cm and 0.35cm);
      \draw[fill=black] (20,20.13) ellipse (0.06cm and 0.35cm);
      \draw[fill=black] (21,16.63) ellipse (0.06cm and 0.35cm);
      \draw[fill=black] (22,14.06) ellipse (0.06cm and 0.35cm);
      \draw[fill=black] (23,11.36) ellipse (0.06cm and 0.35cm);
      \draw[fill=black] (24,9.26) ellipse (0.06cm and 0.35cm);
      \draw[fill=black] (25,7.16) ellipse (0.06cm and 0.35cm);
      \draw[fill=black] (26,5.82) ellipse (0.06cm and 0.35cm);
      \draw[fill=black] (27,4.39) ellipse (0.06cm and 0.35cm);
      \draw[fill=black] (28,3.66) ellipse (0.06cm and 0.35cm);
      \draw[fill=black] (29,2.73) ellipse (0.06cm and 0.35cm);
      \draw[fill=black] (30,2.07) ellipse (0.06cm and 0.35cm);
      \draw[fill=black] (4,90.97) ellipse (0.06cm and 0.35cm);
      \draw[fill=black] (5,91.98) ellipse (0.06cm and 0.35cm);
      \draw[fill=black] (6,91.42) ellipse (0.06cm and 0.35cm);
      \draw[fill=black] (7,91.71) ellipse (0.06cm and 0.35cm);
      \draw[fill=black] (8,84.73) ellipse (0.06cm and 0.35cm);
      \draw[fill=black] (9,85.64) ellipse (0.06cm and 0.35cm);
      \draw[fill=black] (10,79.17) ellipse (0.06cm and 0.35cm);
      \draw[fill=black] (11,72.21) ellipse (0.06cm and 0.35cm);
      \draw[fill=black] (12,67.33) ellipse (0.06cm and 0.35cm);
      \draw[fill=black] (13,62.01) ellipse (0.06cm and 0.35cm);
      \draw[fill=black] (14,57.60) ellipse (0.06cm and 0.35cm);
      \draw[fill=black] (15,53.20) ellipse (0.06cm and 0.35cm);
      \draw[fill=black] (16,48.79) ellipse (0.06cm and 0.35cm);
      \draw[fill=black] (17,45.27) ellipse (0.06cm and 0.35cm);
      \draw[fill=black] (18,41.24) ellipse (0.06cm and 0.35cm);
      \draw[fill=black] (19,35.36) ellipse (0.06cm and 0.35cm);
      \draw[fill=black] (20,32.97) ellipse (0.06cm and 0.35cm);
      \draw[fill=black] (21,28.35) ellipse (0.06cm and 0.35cm);
      \draw[fill=black] (22,24.19) ellipse (0.06cm and 0.35cm);
      \draw[fill=black] (23,21.98) ellipse (0.06cm and 0.35cm);
      \draw[fill=black] (24,18.90) ellipse (0.06cm and 0.35cm);
      \draw[fill=black] (25,16.55) ellipse (0.06cm and 0.35cm);
      \draw[fill=black] (26,13.54) ellipse (0.06cm and 0.35cm);
      \draw[fill=black] (27,11.60) ellipse (0.06cm and 0.35cm);
      \draw[fill=black] (28,9.95) ellipse (0.06cm and 0.35cm);
      \draw[fill=black] (29,7.94) ellipse (0.06cm and 0.35cm);
      \draw[fill=black] (30,6.68) ellipse (0.06cm and 0.35cm);
      \draw[fill=black] (4,90.96) ellipse (0.06cm and 0.35cm);
      \draw[fill=black] (5,91.91) ellipse (0.06cm and 0.35cm);
      \draw[fill=black] (6,93.04) ellipse (0.06cm and 0.35cm);
      \draw[fill=black] (7,92.71) ellipse (0.06cm and 0.35cm);
      \draw[fill=black] (8,86.30) ellipse (0.06cm and 0.35cm);
      \draw[fill=black] (9,86.76) ellipse (0.06cm and 0.35cm);
      \draw[fill=black] (10,81.25) ellipse (0.06cm and 0.35cm);
      \draw[fill=black] (11,81.24) ellipse (0.06cm and 0.35cm);
      \draw[fill=black] (12,75.94) ellipse (0.06cm and 0.35cm);
      \draw[fill=black] (13,71.44) ellipse (0.06cm and 0.35cm);
      \draw[fill=black] (14,66.17) ellipse (0.06cm and 0.35cm);
      \draw[fill=black] (15,61.81) ellipse (0.06cm and 0.35cm);
      \draw[fill=black] (16,58.22) ellipse (0.06cm and 0.35cm);
      \draw[fill=black] (17,53.70) ellipse (0.06cm and 0.35cm);
      \draw[fill=black] (18,50.09) ellipse (0.06cm and 0.35cm);
      \draw[fill=black] (19,47.10) ellipse (0.06cm and 0.35cm);
      \draw[fill=black] (20,44.31) ellipse (0.06cm and 0.35cm);
      \draw[fill=black] (21,38.28) ellipse (0.06cm and 0.35cm);
      \draw[fill=black] (22,35.41) ellipse (0.06cm and 0.35cm);
      \draw[fill=black] (23,31.91) ellipse (0.06cm and 0.35cm);
      \draw[fill=black] (24,29.50) ellipse (0.06cm and 0.35cm);
      \draw[fill=black] (25,25.54) ellipse (0.06cm and 0.35cm);
      \draw[fill=black] (26,23.87) ellipse (0.06cm and 0.35cm);
      \draw[fill=black] (27,20.88) ellipse (0.06cm and 0.35cm);
      \draw[fill=black] (28,18.17) ellipse (0.06cm and 0.35cm);
      \draw[fill=black] (29,15.99) ellipse (0.06cm and 0.35cm);
      \draw[fill=black] (30,13.85) ellipse (0.06cm and 0.35cm);
      \draw[fill=black] (4,95.16) ellipse (0.06cm and 0.35cm);
      \draw[fill=black] (5,94.50) ellipse (0.06cm and 0.35cm);
      \draw[fill=black] (6,94.83) ellipse (0.06cm and 0.35cm);
      \draw[fill=black] (7,93.99) ellipse (0.06cm and 0.35cm);
      \draw[fill=black] (8,93.60) ellipse (0.06cm and 0.35cm);
      \draw[fill=black] (9,88.01) ellipse (0.06cm and 0.35cm);
      \draw[fill=black] (10,88.00) ellipse (0.06cm and 0.35cm);
      \draw[fill=black] (11,83.71) ellipse (0.06cm and 0.35cm);
      \draw[fill=black] (12,83.22) ellipse (0.06cm and 0.35cm);
      \draw[fill=black] (13,78.23) ellipse (0.06cm and 0.35cm);
      \draw[fill=black] (14,74.28) ellipse (0.06cm and 0.35cm);
      \draw[fill=black] (15,70.20) ellipse (0.06cm and 0.35cm);
      \draw[fill=black] (16,65.33) ellipse (0.06cm and 0.35cm);
      \draw[fill=black] (17,61.92) ellipse (0.06cm and 0.35cm);
      \draw[fill=black] (18,58.59) ellipse (0.06cm and 0.35cm);
      \draw[fill=black] (19,54.93) ellipse (0.06cm and 0.35cm);
      \draw[fill=black] (20,51.97) ellipse (0.06cm and 0.35cm);
      \draw[fill=black] (21,48.87) ellipse (0.06cm and 0.35cm);
      \draw[fill=black] (22,45.83) ellipse (0.06cm and 0.35cm);
      \draw[fill=black] (23,43.04) ellipse (0.06cm and 0.35cm);
      \draw[fill=black] (24,38.11) ellipse (0.06cm and 0.35cm);
      \draw[fill=black] (25,35.77) ellipse (0.06cm and 0.35cm);
      \draw[fill=black] (26,31.81) ellipse (0.06cm and 0.35cm);
      \draw[fill=black] (27,30.20) ellipse (0.06cm and 0.35cm);
      \draw[fill=black] (28,26.52) ellipse (0.06cm and 0.35cm);
      \draw[fill=black] (29,23.41) ellipse (0.06cm and 0.35cm);
      \draw[fill=black] (30,22.61) ellipse (0.06cm and 0.35cm);
      \draw[fill=black] (4,95.19) ellipse (0.06cm and 0.35cm);
      \draw[fill=black] (5,97.28) ellipse (0.06cm and 0.35cm);
      \draw[fill=black] (6,96.48) ellipse (0.06cm and 0.35cm);
      \draw[fill=black] (7,96.25) ellipse (0.06cm and 0.35cm);
      \draw[fill=black] (8,96.41) ellipse (0.06cm and 0.35cm);
      \draw[fill=black] (9,96.35) ellipse (0.06cm and 0.35cm);
      \draw[fill=black] (10,96.66) ellipse (0.06cm and 0.35cm);
      \draw[fill=black] (11,93.83) ellipse (0.06cm and 0.35cm);
      \draw[fill=black] (12,94.17) ellipse (0.06cm and 0.35cm);
      \draw[fill=black] (13,93.78) ellipse (0.06cm and 0.35cm);
      \draw[fill=black] (14,90.77) ellipse (0.06cm and 0.35cm);
      \draw[fill=black] (15,90.56) ellipse (0.06cm and 0.35cm);
      \draw[fill=black] (16,87.52) ellipse (0.06cm and 0.35cm);
      \draw[fill=black] (17,87.42) ellipse (0.06cm and 0.35cm);
      \draw[fill=black] (18,85.03) ellipse (0.06cm and 0.35cm);
      \draw[fill=black] (19,85.31) ellipse (0.06cm and 0.35cm);
      \draw[fill=black] (20,82.04) ellipse (0.06cm and 0.35cm);
      \draw[fill=black] (21,82.54) ellipse (0.06cm and 0.35cm);
      \draw[fill=black] (22,79.95) ellipse (0.06cm and 0.35cm);
      \draw[fill=black] (23,77.35) ellipse (0.06cm and 0.35cm);
      \draw[fill=black] (24,77.06) ellipse (0.06cm and 0.35cm);
      \draw[fill=black] (25,75.06) ellipse (0.06cm and 0.35cm);
      \draw[fill=black] (26,71.86) ellipse (0.06cm and 0.35cm);
      \draw[fill=black] (27,69.93) ellipse (0.06cm and 0.35cm);
      \draw[fill=black] (28,67.63) ellipse (0.06cm and 0.35cm);
      \draw[fill=black] (29,65.77) ellipse (0.06cm and 0.35cm);
      \draw[fill=black] (30,63.60) ellipse (0.06cm and 0.35cm);
      \draw[fill=black] (4,95.28) ellipse (0.06cm and 0.35cm);
      \draw[fill=black] (5,97.17) ellipse (0.06cm and 0.35cm);
      \draw[fill=black] (6,98.17) ellipse (0.06cm and 0.35cm);
      \draw[fill=black] (7,97.52) ellipse (0.06cm and 0.35cm);
      \draw[fill=black] (8,98.02) ellipse (0.06cm and 0.35cm);
      \draw[fill=black] (9,97.96) ellipse (0.06cm and 0.35cm);
      \draw[fill=black] (10,98.35) ellipse (0.06cm and 0.35cm);
      \draw[fill=black] (11,98.10) ellipse (0.06cm and 0.35cm);
      \draw[fill=black] (12,98.07) ellipse (0.06cm and 0.35cm);
      \draw[fill=black] (13,98.13) ellipse (0.06cm and 0.35cm);
      \draw[fill=black] (14,96.15) ellipse (0.06cm and 0.35cm);
      \draw[fill=black] (15,96.17) ellipse (0.06cm and 0.35cm);
      \draw[fill=black] (16,96.28) ellipse (0.06cm and 0.35cm);
      \draw[fill=black] (17,96.01) ellipse (0.06cm and 0.35cm);
      \draw[fill=black] (18,94.39) ellipse (0.06cm and 0.35cm);
      \draw[fill=black] (19,94.37) ellipse (0.06cm and 0.35cm);
      \draw[fill=black] (20,94.01) ellipse (0.06cm and 0.35cm);
      \draw[fill=black] (21,92.66) ellipse (0.06cm and 0.35cm);
      \draw[fill=black] (22,92.62) ellipse (0.06cm and 0.35cm);
      \draw[fill=black] (23,90.83) ellipse (0.06cm and 0.35cm);
      \draw[fill=black] (24,90.53) ellipse (0.06cm and 0.35cm);
      \draw[fill=black] (25,88.95) ellipse (0.06cm and 0.35cm);
      \draw[fill=black] (26,88.64) ellipse (0.06cm and 0.35cm);
      \draw[fill=black] (27,87.17) ellipse (0.06cm and 0.35cm);
      \draw[fill=black] (28,87.17) ellipse (0.06cm and 0.35cm);
      \draw[fill=black] (29,85.39) ellipse (0.06cm and 0.35cm);
      \draw[fill=black] (30,85.86) ellipse (0.06cm and 0.35cm);
      \draw[fill=black] (4,95.26) ellipse (0.06cm and 0.35cm);
      \draw[fill=black] (5,97.28) ellipse (0.06cm and 0.35cm);
      \draw[fill=black] (6,98.13) ellipse (0.06cm and 0.35cm);
      \draw[fill=black] (7,98.82) ellipse (0.06cm and 0.35cm);
      \draw[fill=black] (8,99.07) ellipse (0.06cm and 0.35cm);
      \draw[fill=black] (9,99.25) ellipse (0.06cm and 0.35cm);
      \draw[fill=black] (10,99.44) ellipse (0.06cm and 0.35cm);
      \draw[fill=black] (11,99.50) ellipse (0.06cm and 0.35cm);
      \draw[fill=black] (12,99.62) ellipse (0.06cm and 0.35cm);
      \draw[fill=black] (13,99.70) ellipse (0.06cm and 0.35cm);
      \draw[fill=black] (14,99.72) ellipse (0.06cm and 0.35cm);
      \draw[fill=black] (15,99.75) ellipse (0.06cm and 0.35cm);
      \draw[fill=black] (16,99.76) ellipse (0.06cm and 0.35cm);
      \draw[fill=black] (17,99.83) ellipse (0.06cm and 0.35cm);
      \draw[fill=black] (18,99.82) ellipse (0.06cm and 0.35cm);
      \draw[fill=black] (19,99.88) ellipse (0.06cm and 0.35cm);
      \draw[fill=black] (20,99.86) ellipse (0.06cm and 0.35cm);
      \draw[fill=black] (21,99.86) ellipse (0.06cm and 0.35cm);
      \draw[fill=black] (22,99.87) ellipse (0.06cm and 0.35cm);
      \draw[fill=black] (23,99.89) ellipse (0.06cm and 0.35cm);
      \draw[fill=black] (24,99.92) ellipse (0.06cm and 0.35cm);
      \draw[fill=black] (25,99.92) ellipse (0.06cm and 0.35cm);
      \draw[fill=black] (26,99.92) ellipse (0.06cm and 0.35cm);
      \draw[fill=black] (27,99.94) ellipse (0.06cm and 0.35cm);
      \draw[fill=black] (28,99.94) ellipse (0.06cm and 0.35cm);
      \draw[fill=black] (29,99.90) ellipse (0.06cm and 0.35cm);
      \draw[fill=black] (30,99.88) ellipse (0.06cm and 0.35cm);
      \node at (5,5) {\scriptsize$\ell=1$};
      \node at (7,7) {\scriptsize$\ell=2$};
      \node at (11,11) {\scriptsize$\ell=4$};
      \node at (14,17) {\scriptsize$\ell=6$};
      \node at (15,25) {\scriptsize$\ell=8$};
      \node at (18,30) {\scriptsize$\ell=10$};
      \node at (19,37.5) {\scriptsize$\ell=12$};
      \node at (20.5,42) {\scriptsize$\ell=14$};
      \node at (20.5,54) {\scriptsize$\ell=16$};
      \node at (20.5,68) {$\vdots$};
      \node at (20.5,78) {\scriptsize$\ell=30$};

      \node at (27,82) {\scriptsize$\ell=50$};
      \node at (27,95) {\scriptsize monomial};
    \end{tikzpicture} 
    \caption{The graphs of ratios of vanishing cup product}
    \label{fig:grf}
  \end{center}
\end{figure}
The $x$-axis means the degree $n$ of $G(\underline{x})$ and the $y$-axis means the ratio of vanishing cup product among the 50,000 pairs. When there is no limitation of number of monomial terms (the bottom graph), the ratio of vanishing cup product converges to zero as the degree $n$ grows. However, we see that the vanishing ratio grows as the $\ell$ increases (i.e. the number of monomial terms of $U_0, U_1$ becomes smaller). Moreover, in the top graph case ($U_0$ and $U_1$ themselves are monomials), it converges to 100 as the degree $n$ grows. Therefore, for simplification of computation of the triple Massey product (we need vanishing of the cup product), we choose $U_0, U_1$ so that their number of monomial terms is less than or equal to three.

\begin{algorithm}[H]\label{Alg1}
  \DontPrintSemicolon
  \KwIn{Polynomial $G(\underline x)$}
  \KwOut{Homogeneous polynomials $U_0,U_1,U_2$ such that $U_0U_1\in J_G$ and $U_1U_2\in J_G$}
  \Begin{
    \Repeat{$U_0U_1\in J_G$ and $U_1U_2\in J_G$}{
      $U_0\leftarrow$ homogeneous random polynomial in $\Bbbk[\underline x]$ with degree=$2\deg G-3$ and number of terms $\leq3$\;
      $U_1\leftarrow$ homogeneous random polynomial in $\Bbbk[\underline x]$ with degree=$\deg G-3$ and number of terms $\leq3$\;
      $U_2\leftarrow$ homogeneous random polynomial in $\Bbbk[\underline x]$ with degree=$2\deg G-3$ and number of terms $\leq3$\;
      }
    \Return{$U_0,U_1,U_2$}
  }
  \caption{Making $U_i$ corresponds to $w_i=[W_i]$ for triple massey product randomly.}
\end{algorithm}
The following examples are random polynomials which came from Algorithm \ref{Alg1} where $G(\underline x)=x_0^5+x_1^5+x_2^5$:
\begin{align}
U_0&=x_0^3x_1^4+x_1^5x_2^2 & U_1&=\tfrac{1}{4}x_2^2 & U_2&=\tfrac{1}{3}x_0^4x_1x_2^2,\label{Example1}\\
U_0&=-\tfrac{1}{6}x_0^3x_1^2x_2^2& U_1&=x_2^2 & U_2&=\tfrac{2}{9}x_0^4x_2^3.\label{Example2}
\end{align}\par
Next, let us calculate the triple Massey product. Consider the following algorithm:

\begin{algorithm}[H]\label{Alg2}
  \DontPrintSemicolon
  \KwIn{Polynomials $G,U_0,U_1,U_2$ such that $U_0U_1\in J_G$ and $U_1U_2\in J_G$}
  \KwOut{Triple massey product $\langle U_0,U_1,U_2\rangle$}
  \Begin{
    \tcc{Calculate A,B in Theorem \ref{mt}}
    $R_i^{(01)}\leftarrow$Polynomials such that $U_0U_1=\sum R_i^{(01)}G_i$\;
    $R_i^{(12)}\leftarrow$Polynomials such that $U_1U_2=\sum R_i^{(12)}G_i$\;
    $A,B\leftarrow$ By Theorem \ref{mt}\;
    \tcc{Make $(\Bbbk[\underline x]/J)_{6\deg G-9}$ for Calculation.}
    \nl\label{line2}$M\leftarrow$Ordered set of all of mononmials whose degree is $6\deg G-9$\;
    $N\leftarrow$Set of all af monomials whosee degree is $4\deg G-7$\;
    $L\leftarrow\{G_0^2,G_1^2,G_2^2\}$\;
    \nl\label{line3}$J\leftarrow\emptyset$\;
    \nl\label{line4}\For{$n\in N$}{
      \For{$\ell\in L$}{
        $J\leftarrow J\cup\{n\ell\}$
      }
    }
    \nl\label{line1}$K\leftarrow$ $\Bbbk$-vector space dimension=$|M|$\;
    \nl\label{line5}$J_V\leftarrow$ Span(vector representations of elements in $J$)\;
    $K/J\leftarrow$ Qutient space $K/J_V$\tcc*[r]{1 dimensional vectorspace}
    $\langle U_0,U_1,U_2\rangle\leftarrow$(Image of $\deg G\cdot(A-B)$ in $K/J$)/(Image of $\mathrm{Det}_G(\underline x)$ in $K/J$)\;
    \Return{$\langle U_0,U_1,U_2\rangle$}
  }
  \caption{Calculate $\langle U_0,U_1,U_2\rangle$.}
\end{algorithm}

\noindent The main step of Algorithm \ref{Alg2} is to realize $(\Bbbk[\underline x]/J_G)_{6\deg G-9}$ in a computer. 
We will use a finite dimensional $\Bbbk$-vector space with the standard basis in order to handle the polynomial ring in a computer\footnote{The program \textsf{Sagemath} has a function which calculates a quotient space of a finite dimensional vector space.}. We take $\Bbbk$-vecotr space $K$ in \textsf{line \ref{line1}} with standard basis $\{e_1,\dots,e_s\}$ which corresponds to $M=\{m_1,\dots,m_s\}$ in \textsf{line \ref{line2}}. Then we take $J$ as the basis of $J_{6\deg G-9}$ (\textsf{line \ref{line3}$\sim$\ref{line4}}), and make $J_V$ in \textsf{line \ref{line5}} which corresponds to $J$ under the correspondence between $\{e_1,\dots,e_s\}$ and $\{m_1,\dots,m_s\}$. 
This enables us to compute the quotient space and the remaining calculation using a computer.\par

As a result of Algorithm \ref{Alg2}, we provide two numerical examples with zero (\ref{Example1}) and nonzero (\ref{Example2}) Massey triple products:
\begin{displaymath}
\begin{aligned}
\textrm{(\ref{Example1})}&\quad
\left\{\begin{array}{l}
\quad U_0U_1=0\cdot G_0+\tfrac{1}{20}x_0^3x_2^2\cdot G_1+\tfrac{1}{20}x_1^5\cdot G_2,\\
\quad U_1U_2=0\cdot G_0+0\cdot G_1+\tfrac{1}{60}x_0^4x_1\cdot G_2,\\
\quad A-B=\tfrac{5}{01}x_0^9x_1^6x_2^6-\tfrac{5}{01}x_0^4x_1^11x_2^6+\tfrac{5}{01}x_0^7x_1^5x_2^9,\\
\quad \deg G\cdot(A-B)=0\cdot\mathrm{Det}_G(\underline x)+\tfrac{1}{01}x_0x_1^6x_2^6\cdot G_0^2-\tfrac{1}{01}x_0^4x_1^3x_2^6\cdot G_1^2+\tfrac{1}{01}x_0^7x_1^5x_2\cdot G_2^2,\\
\quad \langle U_0,U_1,U_2\rangle=0.
\end{array}\right.\\
\textrm{(\ref{Example2})}&\quad
\left\{\begin{array}{l}
\quad U_0U_1=0\cdot G_0+0\cdot G_1-\tfrac{1}{30}x_0^3x_1^2\cdot G_2,\\
\quad U_1U_2=0\cdot G_0+0\cdot G_1+\tfrac{2}{45}x_0^4x_2\cdot G_2,\\
\quad A-B=-\tfrac{5}{27}x_0^01x_1^2x_2^7+\tfrac{5}{27}x_0^7x_1^7x_2^7,\\
\quad \deg G\cdot(A-B)= \tfrac{1}{8640000}\cdot\mathrm{Det}_G(\underline x)-\tfrac{1}{27}x_0^4x_1^2x_2^7\cdot G_0^2+0\cdot G_1^2+0\cdot G_2^2,\\
\quad \langle U_0,U_1,U_2\rangle=\tfrac{1}{8640000}.
\end{array}\right.
\end{aligned}
\end{displaymath}
where $\mathrm{Det}_G(\underline x)=8000000x_0^7x_1^7x_2^7$.\par


%
%
%
%
%
%
%
%
%
%
%

%


%


\end{document}